\documentclass[12pt,a4paper]{amsart}%
\usepackage{amssymb}
\usepackage{amsmath}
\usepackage{amsfonts}
\usepackage{doublespace}
\usepackage{color}
\usepackage{graphicx}%
\setcounter{MaxMatrixCols}{30}
\providecommand{\U}[1]{\protect\rule{.1in}{.1in}}
\theoremstyle{plain}
\newtheorem{theorem}{Theorem}

\newtheorem{proposition}[theorem]{Proposition}

\numberwithin{equation}{section}
\begin{document}
\title[Dirichlet-Neumann problem]{Inverse Dirichlet to Neumann problem for nodal curves}
\author{Gennadi Henkin}
\address{
\begin{spacing}{1.1}%
[G. Henkin and V. Michel]Universit\'{e} Pierre et Marie Curie\\
4, place Jussieu, 75252 Paris Cedex 05, France\\
[G. Henkin] CEMI, Academy of Science, 117418, Moscow, Russia%
\end{spacing}%
}
\email{
\begin{spacing}{1.1}%
henkin@math.jussieu.fr, michel@math.jussieu.fr%
\end{spacing}%
}
\author{Vincent Michel}
\date{31/10/2012}
\subjclass{32D15, 32C25, 32V15, 35R30, 58J32 }
\keywords{Conformal structure, Riemann surface, nodal curve, Green function, inverse
Dirichlet to Neumann problem }

\begin{abstract}%
\begin{spacing}{1.1}%
This paper proposes direct and inverse results for the Dirichlet and Dirichlet
to Neumann problems\ for complex curves with nodal type singularities. As an
application, we give a method to reconstruct the conformal structure of a
compact surface of $\mathbb{R}^{3}$ with constant scalar conductivity from
electrical current measurements in a neighborhood of one of its points.%
\end{spacing}%

\end{abstract}
\maketitle
\tableofcontents

%

\begin{spacing}{1.1}%

\section{Introduction}

Let $Z$ be a compact or open bordered surface of $\mathbb{R}^{3}$ equipped
with the complex structure induced by the standard euclidean metric of
$\mathbb{R}^{3}$~; this point of view on Riemann surface, which goes to a
result of Gauss about isothermal coordinates in 1822, is not restrictive since
it has been proved by Garsia~\cite{GaA1961} for the compact case and by
R\"{u}edy~\cite{RuR1971} for the bordered case that any abstract Riemann
surface is isomorphic to such a manifold. Let $\overline{\partial}$ be the
Cauchy-Riemann operator of $Z$, $d^{c}=i\left(  \overline{\partial}%
-\partial\right)  $ and $d=\partial+\overline{\partial}$. If $Z$ has a
constant scalar conductivity and if there is no time fluctuation and no source
nor sink of current, it follows from the Maxwell's equations that an
electrical potential on an open set of $Z$ is a smooth function $U$ which
satisfies the equation $dd^{c}U=0$~; the form $d^{c}U=i\left(  \overline
{\partial}U-\partial U\right)  $ can be then seen as modeling the physical
current arising from the potential $U$ (see e.g.\ \cite{SyJ1990}). An isolated
finite charge induces a current with a simple pole. When the current $d^{c}U$
is theoretically allowed to have singularities on a discrete set, it is
natural to limit them to simple charged poles. The fact that charges should
somehow compensate and arise from simple poles is mathematically natural
because\ according to proposition~\ref{SingLog}, singular potentials can be
seen as harmonic distributions on a complex nodal curve. The theorem below
gives an electrostatic interpretation for an accurate Dirichlet problem when a
discrete set of finites charges is allowed. \smallskip

\noindent\textbf{Theorem} (Riemann 1851, Klein 1882).\textbf{\ }\textit{Let
}$Z$\textit{\ be a compact or bordered connected oriented smooth surfaces in
}$\mathbb{R}^{3}$\textit{\ equipped with the conformal structure induced by
the standard euclidean metric of }$\mathbb{R}^{3}$\textit{. Let }%
$\overline{\partial}$ \textit{be its} \textit{Cauchy-Riemann operator, }%
$d^{c}=i\left(  \overline{\partial}-\partial\right)  $\textit{\ and
}$d=\partial+\overline{\partial}$. \textit{Assume }$u$\textit{\ is an
electrical potential on }$bZ$\textit{\ (this assumption is empty when }%
$Z$\textit{\ is compact) and that }$Z$\textit{\ has electrical real charges
}$\pm c_{j}$\textit{\ concentrated at points }$a_{j}^{\pm}$, $1\leqslant
j\leqslant\nu$\textit{. Then there is a uniqque electrical potential }$U$
\textit{extending} \textit{(when }$Z$\textit{\ is non compact) }$u$
\textit{to} $Z$ \textit{such that }$dd^{c}U=0$\textit{\ on }$Z\backslash
\left\{  a_{j}^{\pm};~1\leqslant j\leqslant\nu\right\}  $\textit{\ and the
residue }$\operatorname*{Res}_{a_{j}^{\pm}}\left(  d^{c}U\right)
\overset{def}{=}\frac{1}{2\pi}\int_{dist(.,a_{j}^{\pm})=\varepsilon}d^{c}U$
($\varepsilon>0$ small enough) \textit{of }$d^{c}U=i\left(  \overline
{\partial}-\partial\right)  U$\textit{\ at }$a_{j}^{\pm}$\textit{\ is }$\pm
c_{j}$, $1\leqslant j\leqslant\nu$.\smallskip\textit{\ }

This problem was firstly considered by Gauss in 1840, Tomson (also named Lord
Kelvin) and Dirichlet in 1847. Riemann gave in 1851 a mathematically
incomplete proof. Klein wrote in 1882 an electrostatic interpretation which
has been considered as a sufficient justification by physicists. Effective and
correct constructions were given by Fredholm in 1899 and Hilbert in 1901. A
good report of this story can be found in a book of de
Saint-Gervais~\cite{SG2010Li}.

In 1962, Gelfand~\cite{GeI1962} formulated and obtained the first non trivial
result in the inverse problem of reconstructing the complex structure of a
compact surface in $\mathbb{R}^{3}$ from the knowledge of the spectrum of its
laplacien. This problem has been solved for most surfaces by
Buser~\cite{BuP1997} in 1997. A related inverse question has been enunciated
by Wentworth in 2010~: How to recover the conformal structure of a compact
Riemann surface from Dirichlet to Neumann data type of some subdomain. The
theorem~\ref{T/ compactR3} below, which is a development of inverse results
contained in \cite[section~2]{HeG-MiV2007}, is an inverse version for the
compact case of the Riemann and Klein theorem and gives a constructive answer
to Wentworth's question.

Before we formulate theorem~\ref{T/ compactR3opt}, we set up some definitions
and notations. Let $Z$\ be a compact connected oriented smooth surface in
$\mathbb{R}^{3}$\ equipped with the conformal structure induced by the
standard euclidean metric of $\mathbb{R}^{3}$. We denote by $D_{Z}$ the set of
couples $\left(  a,c\right)  $ in $Z^{6}\times\mathbb{R}^{3}$ such that
$a=\left(  a_{\ell}^{-},a_{\ell}^{+}\right)  _{0\leqslant\ell\leqslant2}$ is a
family of six mutually distinct points and $c=\left(  c_{\ell}\right)
_{0\leqslant\ell\leqslant2}\in\mathbb{R}^{3}$. If $\left(  a,c\right)  \in
D_{Z}$, $U_{Z,\ell}^{a,c}$ denotes a function which is harmonic on
$Z\backslash\left\{  a_{\ell}^{-},a_{\ell}^{+}\right\}  $ and such that
$\partial U_{Z,\ell}^{a,c}$ has a simple pole at $a_{\ell}^{\pm}$ with residue
$\pm c_{\ell}$~; as a matter of fact, $U_{Z,\ell}^{a,c}$ is a standard Green
bipolar function and while it is determined only up to an additive constant,
$\partial U_{Z,\ell}^{a,c}$ is unique. Then, as explained in section~3, if the
$\partial U_{Z,\ell}^{a,c}$ have no common zero, we can define a map
$F_{Z}^{a,c}=\left(  \partial U_{Z,0}^{a,c}:\partial U_{Z,1}^{a,c}:\partial
U_{Z,0}^{a,c}\right)  $ from $Z\backslash\left\{  a_{\ell}^{\pm}%
;~0\leqslant\ell\leqslant2\right\}  $ to $\mathbb{CP}_{2}$ which in $\left\{
\partial U_{Z,0}^{a,c}\neq0\right\}  $ has the affine representation $\left(
\frac{\partial U_{Z,1}^{a,c}}{\partial U_{Z,0}^{a,c}},\frac{\partial
U_{Z,2}^{a,c}}{\partial U_{Z,0}^{a,c}}\right)  $~; the quotients here are well
defined meromorphic functions because $\dim Z=1$. We denote by $E_{Z}$ the set
of $\left(  a,c\right)  $ in $D_{Z}$ such that $F_{Z}^{a,c}$ is well defined
and injective outside some finite subset of $Z$ . It is clear that $E_{Z}$ is
an open subset of $D_{Z}$. We can now give an inverse result for the compact
case of the Riemann and Klein theorem.

\begin{theorem}
\label{T/ compactR3opt}Let $Z$\ and $Z^{\prime}$ be compact connected oriented
smooth surfaces in $\mathbb{R}^{3}$\ equipped with the conformal structures
induced by the standard euclidean metric of $\mathbb{R}^{3}$. Assume that
$Z\cap Z^{\prime}$ contains a surface $S$ and let $a=\left(  a_{\ell}%
^{-},a_{\ell}^{+}\right)  _{0\leqslant\ell\leqslant2}$ be a 6-uple of mutually
distinct points in $S$. Assume that for some $c\in\mathbb{R}^{3}$, $\left(
a,c\right)  \in E_{Z}\cap E_{Z^{\prime}}$ and $\left(  U_{Z,\ell}%
^{a,c}\left\vert _{S}\right.  \right)  _{0\leqslant\ell\leqslant2}=\left(
U_{Z^{\prime},\ell}^{a,c}\left\vert _{S}\right.  \right)  _{0\leqslant
\ell\leqslant2}$. Then $Z$ and $Z^{\prime}$ are isomorphic. Moreover, $Z$ can
be explicitly reconstructed from $\left(  \partial U_{Z,\ell}^{a,c}\left\vert
_{S}\right.  \right)  _{0\leqslant\ell\leqslant2}$.
\end{theorem}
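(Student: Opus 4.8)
The plan is to realise both $Z$ and $Z^{\prime}$ as normalizations of one and the same algebraic curve in $\mathbb{CP}_{2}$, the image of $F_{Z}^{a,c}$, and to force the two images to coincide by exploiting the agreement of the Green data on $S$. First I would check that $C:=F_{Z}^{a,c}\left( Z\right) $ is an irreducible algebraic curve of $\mathbb{CP}_{2}$ and that $F_{Z}^{a,c}\colon Z\rightarrow C$ is a normalization. Since $\left( a,c\right) \in E_{Z}$, the map $F_{Z}^{a,c}$ is a well defined holomorphic map of the compact Riemann surface $Z$ into $\mathbb{CP}_{2}$; its image is a compact one dimensional analytic subset, hence an algebraic curve $C$ by Chow's theorem, and $C$ is irreducible because $Z$ is connected. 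As $\left( a,c\right) \in E_{Z}$ also forces $F_{Z}^{a,c}$ to be injective off a finite set, $F_{Z}^{a,c}$ is birational onto $C$, and, $Z$ being smooth, it is the normalization of $C$. The same reasoning applied to $Z^{\prime}$ produces an irreducible algebraic curve $C^{\prime}:=F_{Z^{\prime}}^{a,c}\left( Z^{\prime}\right) $ with normalization $F_{Z^{\prime}}^{a,c}\colon Z^{\prime}\rightarrow C^{\prime}$.

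The heart of the matter is to prove $C=C^{\prime}$. The surface $S\subset Z\cap Z^{\prime}$ carries a single conformal structure, the one induced by the euclidean metric of $\mathbb{R}^{3}$, so the operators $\overline{\partial}$ and $\partial$ computed on $S$ are the same whether $S$ is viewed inside $Z$ or inside $Z^{\prime}$. Hence the hypothesis $U_{Z,\ell}^{a,c}\left\vert _{S}\right. =U_{Z^{\prime},\ell}^{a,c}\left\vert _{S}\right. $ yields $\partial U_{Z,\ell}^{a,c}\left\vert _{S}\right. =\partial U_{Z^{\prime},\ell}^{a,c}\left\vert _{S}\right. $ for $0\leqslant\ell\leqslant2$, and therefore $F_{Z}^{a,c}\left\vert _{S}\right. =F_{Z^{\prime}}^{a,c}\left\vert _{S}\right. $. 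As $S$ is open in $Z$ and $F_{Z}^{a,c}$ is generically a local biholomorphism onto $C$, the common value $\Gamma:=F_{Z}^{a,c}\left( S\right) =F_{Z^{\prime}}^{a,c}\left( S\right) $ is an infinite (indeed real two dimensional) subset of $C\cap C^{\prime}$. Two distinct irreducible plane curves meet in only finitely many points by B\'{e}zout's theorem, so $C$ and $C^{\prime}$, being irreducible with infinite intersection, must coincide. Consequently $Z$ and $Z^{\prime}$ are normalizations of the same irreducible curve $C=C^{\prime}$, and uniqueness of the normalization shows that $Z$ and $Z^{\prime}$ are isomorphic.

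For the explicit reconstruction I would recover $C$, hence $Z$ as its normalization, from the boundary data $\left( \partial U_{Z,\ell}^{a,c}\left\vert _{S}\right. \right) _{0\leqslant\ell\leqslant2}$ alone. These data give the arc $\Gamma=F_{Z}^{a,c}\left( S\right) $, equivalently the traces on $S$ of the affine coordinates $w_{1}=\partial U_{Z,1}^{a,c}/\partial U_{Z,0}^{a,c}$ and $w_{2}=\partial U_{Z,2}^{a,c}/\partial U_{Z,0}^{a,c}$. Since $\Gamma$ lies on the algebraic curve $C=\left\{ P\left( w_{1},w_{2}\right) =0\right\} $, the defining polynomial $P$ is pinned down by the trace data: its degree is read off from the data and its coefficients are fixed by the polynomial relation satisfied by the known traces $w_{1}\left\vert _{S}\right. ,w_{2}\left\vert _{S}\right. $, as furnished by the Cauchy-type reconstruction of the preceding sections. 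Knowing $P$ we know $C$, and $Z$ is then obtained as its normalization.

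I expect the main difficulty to be twofold. On the soft side, one must justify carefully that $F_{Z}^{a,c}$ is proper with irreducible algebraic image and birational onto it, so that the normalization description is legitimate and the Bézout step applies. On the effective side, the genuinely substantial point is the reconstruction of $P$ from the trace of $\left( w_{1},w_{2}\right) $ on $S$: here it is the analytic continuation of the arc $\Gamma$ to the whole curve, rather than the Bézout uniqueness argument, that actually recovers the missing part of $Z$.
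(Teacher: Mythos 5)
Your argument is correct, but it takes a genuinely different route from the paper's. The paper reduces the compact problem to the bordered inverse Dirichlet--Neumann theory: it sets $\gamma=bS$, checks that $\left(\gamma,u,\theta u\right)$ with $u=\left(U_{Z,\ell}^{a,c}\left\vert_{\gamma}\right.\right)$ and $\theta u=\left(\partial U_{Z,\ell}^{a,c}\left\vert_{\gamma}\right.\right)$ is a restricted DN-datum for both $Z\backslash S$ and $Z^{\prime}\backslash S$ (this is where $\left(a,c\right)\in E_{Z}\cap E_{Z^{\prime}}$ enters), invokes theorem~\ref{T/ unicite} (equivalently th.~1 of the cited 2007 paper) to get an isomorphism $Z\backslash S\longrightarrow Z^{\prime}\backslash S$ equal to the identity on $bS$, glues it with $Id_{S}$ via Morera, and reconstructs $Z\backslash S$ by the Cauchy-type slice formulas of theorem~\ref{T/ looseIDN}. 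You instead argue globally: $F_{Z}^{a,c}$ extends holomorphically across the poles $a_{\ell}^{\pm}$, its image is an irreducible algebraic curve by Remmert and Chow, membership in $E_{Z}$ makes it birational onto that image hence a normalization, the coincidence of the data on the open set $S$ forces $C=C^{\prime}$ by B\'ezout, and uniqueness of normalization gives $Z\cong Z^{\prime}$ (indeed an isomorphism that is the identity on $S$, since $F_{Z^{\prime}}\circ\psi=F_{Z}=F_{Z^{\prime}}$ there off a finite set). What your route buys is economy and robustness: no appeal to the bordered uniqueness theorem and no need for $f$ to embed $bS$. What it loses is generality --- it is tied to compactness through Chow's theorem, whereas the paper's reduction is exactly the mechanism that also handles the bordered and nodal versions --- and some effectiveness in the reconstruction step: ``reading off'' the degree and coefficients of the defining polynomial of $C$ from the trace $\Gamma=F\left(S\right)$ should be made precise (for instance as the smallest $d$ for which the linear system expressing that a degree-$d$ polynomial vanishes on $\Gamma$ admits a nontrivial solution, which does pin down the reduced equation of the irreducible curve $C$), whereas the paper's formulas~(\ref{F/ equation pour Y}), integrated over $bS$, produce $Y$ in closed form. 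Neither of these is a gap in the proof of the statement as written; they are the price of the shortcut.
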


The practical interest of this result would be greatly improved if $E_{Z}$
would be dense in $Z^{6}\times\mathbb{C}^{3}$. If this seems very likely, it
has yet to be proved. So we slightly modify our point of view in allowing
small perturbations. Let us be precise. For $n\in\mathbb{N}^{\ast}$, denote by
$D_{Z,n}$ the set of 4-uples $\left(  a,c,p,\kappa\right)  $ in $Z^{6}%
\times\mathbb{C}^{3}\times\left(  Z^{n}\right)  ^{3}\times\left(
\mathbb{C}^{n}\right)  ^{3}$ such that $\left(  a,c\right)  \in D_{Z}$,
$p=\left(  p_{\ell}\right)  _{0\leqslant\ell\leqslant2}$ and $\kappa=\left(
\kappa_{\ell}\right)  _{0\leqslant\ell\leqslant2}$ where each $p_{\ell
}=\left(  p_{\ell,j}\right)  _{1\leqslant j\leqslant n}$ is a family of
mutually distinct points of $Z\backslash\left\{  a_{0}^{-},a_{0}^{+},a_{1}%
^{-},a_{1}^{+},a_{2}^{-},a_{2}^{+}\right\}  $ and each $\kappa_{\ell}=\left(
\kappa_{\ell,j}\right)  _{1\leqslant j\leqslant n}\in\mathbb{C}^{n}$ satisfies
$%
{\displaystyle\sum\limits_{1\leqslant j\leqslant n}}
\kappa_{\ell,j}=0$. If $\left(  a,c,p,\kappa\right)  \in E_{Z,n}$, we denote
by $V_{Z,\ell}^{p,\kappa}$ a function which is harmonic on $Z\backslash
\left\{  p_{\ell,j};~1\leqslant j\leqslant n\right\}  $ and such that
$\partial V_{Z,\ell}^{p,\kappa}$ has simple poles at $p_{\ell,j}$ with
residues $\kappa_{\ell,j}$~;$\ V_{Z}^{p,\kappa}$ is unique up to an additive constant.

We denote by $E_{Z,n}$ the set of $\left(  a,c,p,\kappa\right)  \in D_{Z,n}$
such that $F_{Z}^{a,c,p,\kappa}=\left(  \partial U_{Z,0}^{a,c}+\partial
V_{Z,0}^{p,\kappa}:\partial U_{Z,1}^{a,c}+\partial V_{Z,1}^{p,\kappa}:\partial
U_{Z,2}^{a,c}\right)  $ is well defined and injective outside some finite
subset of $Z$. It is clear that $E_{Z,n}$ is open in $D_{Z,n}$. According to
proposition~\ref{P/ compact generique}, elements of $E_{Z,n}$ can be called
\textit{generic}.

\begin{proposition}
\label{P/ compact generique}Let $Z$\ be a compact connected oriented smooth
surface in $\mathbb{R}^{3}$\ equipped with the conformal structure induced by
the standard euclidean metric of $\mathbb{R}^{3}$ and $S$ a subdomain of $Z$.
Consider $\left(  a,c\right)  $ in $D_{Z}$ with $a\in S^{6}$. Then there is
$n\in\mathbb{N}^{\ast}$ such that for any neighborhood $W$ of $0$ in
$\mathbb{C}$, there exists $\left(  p,\kappa\right)  \in\left(  S^{n}\right)
^{3}\times\left(  W^{n}\right)  ^{3}$ such that $\left(  a,c,p,\kappa\right)
\in E_{Z,n}$.
\end{proposition}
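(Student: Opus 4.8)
The plan is to reformulate ``injective outside a finite subset'' as a degree-one condition and then to obtain it by a parametric transversality argument, using the perturbing differentials of the third kind as the source of genericity.

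First I would reduce the statement to a separation property. Writing $\omega=\partial U_{Z,2}^{a,c}$ and, in the affine chart $\{\omega\neq0\}$, $f=(\partial U_{Z,0}^{a,c}+\partial V_{Z,0}^{p,\kappa})/\omega$ and $g=(\partial U_{Z,1}^{a,c}+\partial V_{Z,1}^{p,\kappa})/\omega$, the map $F_Z^{a,c,p,\kappa}=(f:g:1)$ is injective outside a finite subset of $Z$ precisely when the off-diagonal coincidence set $\{(z,w)\in(Z\times Z)\setminus\Delta:\ F_Z^{a,c,p,\kappa}(z)=F_Z^{a,c,p,\kappa}(w)\}$ is finite: finiteness means only finitely many points of $Z$ have a partner with the same image, and off that finite set the map is one-to-one. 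Equivalently this is the condition that $F_Z^{a,c,p,\kappa}$ have degree one onto its image, i.e.\ that $f$ and $g$ generate the field $\mathcal{M}(Z)$ of meromorphic functions. I set up the parameter space $P$ as follows: the data $(p,\kappa)$ range over triples of $n$ poles in $S$ together with residues in the small neighbourhood $W$ of $0$ subject to $\sum_{j}\kappa_{\ell,j}=0$. Each $\partial V_{Z,\ell}^{p,\kappa}$ is then a differential of the third kind with these prescribed simple poles and residues (existing by Riemann--Roch, up to a normalised holomorphic part), so $h_\ell=\partial V_{Z,\ell}^{p,\kappa}/\omega$ is a small meromorphic function away from its poles, yet genuinely singular at the $p_{\ell,j}$. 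The decisive feature is that, although the residues are forced to be small, introducing a new pole is \emph{not} a small modification of the local sheeted structure of the map; this is what will let an arbitrarily small perturbation change a global invariant such as the degree.

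The heart of the argument is a transversality statement. I would consider the universal evaluation
\[
\Psi:\ \bigl((Z\times Z)\setminus\Delta\bigr)\times P\longrightarrow \mathbb{CP}_2\times\mathbb{CP}_2,\qquad (z,w,(p,\kappa))\longmapsto\bigl(F_Z^{a,c,p,\kappa}(z),\,F_Z^{a,c,p,\kappa}(w)\bigr),
\]
and show it is transverse to the diagonal $\Delta_{\mathbb{CP}_2}$. The required submersivity amounts to being able to prescribe independently the two affine values $(f(z),g(z))$ and $(f(w),g(w))$ at a given pair $z\neq w$; since adding $h_0$ to $f$ and $h_1$ to $g$ lets one impose the four numbers $h_0(z),h_0(w),h_1(z),h_1(w)$, this follows from surjectivity of the evaluation map on third-kind differentials with poles at a generic divisor of large enough degree in $S$ --- the Riemann--Roch input that dictates the value of $n$. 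Granting this, $\Psi^{-1}(\Delta_{\mathbb{CP}_2})$ has complex codimension $2$, so by the parametric transversality (Sard--Thom) theorem, for almost every parameter the off-diagonal coincidence set is an analytic subset of $(Z\times Z)\setminus\Delta$ of codimension $2$, hence $0$-dimensional, hence finite. Because transversality is open and dense, such parameters exist with $\kappa$ in any prescribed neighbourhood $W$ of $0$; together with the open conditions placing $(a,c,p,\kappa)$ in $D_{Z,n}$ and the absence of common zeros of the three forms, this exhibits the required element of $E_{Z,n}$.

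The step I expect to be the main obstacle is establishing the submersivity of $\Psi$ \emph{uniformly} over the non-compact pair space $(Z\times Z)\setminus\Delta$: one must guarantee independent control of the four values not only at a fixed pair but for all pairs simultaneously, including pairs approaching the diagonal (where it meets the finite ramification locus and independent control degenerates) and pairs near the chosen poles $p_{\ell,j}$ (where $h_\ell$ blows up). I would handle the diagonal by observing that limits $z\to w$ in the coincidence set correspond to ramification, of which there are only finitely many, so they do not enlarge the finite bad set; and I would handle the poles by also letting $p$ vary within $S$, enlarging $P$ so that the Riemann--Roch evaluation stays surjective away from a proper analytic subset of pairs. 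Keeping the residues inside $W$ is then automatic, since the construction only uses that the transverse parameters form a dense open set accumulating at $\kappa=0$.
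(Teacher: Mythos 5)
You should first be aware that the paper does not actually prove this proposition: immediately after stating it, the authors write that it, together with proposition~\ref{P/ generique B}, ``will be proved in a separate paper because they involve methods and results of complex analysis which deserve attention of their own.'' So there is no proof in the text to compare yours against, and your attempt has to be judged on its own terms. On those terms, your overall strategy --- recast generic injectivity of $F_{Z}^{a,c,p,\kappa}$ as finiteness of the off-diagonal coincidence locus, then obtain it by parametric transversality with $(p,\kappa)$ as the parameters --- is a sensible and probably viable route, but as written it has a genuine gap at the decisive step.

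Parametric transversality requires that at every point of $\Psi^{-1}(\Delta_{\mathbb{CP}_{2}})$ the $\kappa$-derivative of $\Psi$ surject onto the normal space of the diagonal; in the chart $\{\omega\neq0\}$ with $\omega=\partial U_{Z,2}^{a,c}$ this means that for every coincident pair $(z,w)$ the map $\kappa_{\ell}\mapsto\partial V_{Z,\ell}^{p,\kappa}(z)/\omega(z)-\partial V_{Z,\ell}^{p,\kappa}(w)/\omega(w)$ is nonzero for some admissible $\kappa_{\ell}$. You assert this follows from ``surjectivity of the evaluation map on third-kind differentials \dots the Riemann--Roch input,'' but the space you are allowed to perturb by is not the complete linear system $H^{0}(K_{Z}+p_{\ell,1}+\dots+p_{\ell,n})$: it is the $(n-1)$-dimensional subspace of third-kind differentials that arise as $\partial V$ for a single-valued harmonic distribution $V$ (a period normalization), further divided by the fixed form $\omega$, which has zeros. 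Riemann--Roch says nothing directly about this particular subspace, and the claim is genuinely false for small $n$ (for $n=2$ the perturbation is one fixed meromorphic function scaled by $\kappa$, whose level sets give a positive-dimensional locus where the derivative vanishes); establishing the separation property for all relevant pairs, with an explicit choice of $n$ and of generic $p$, is exactly the content the authors defer to another paper, and you flag it as ``the main obstacle'' without closing it. Two secondary points: since only the first two homogeneous components of $F_{Z}^{a,c,p,\kappa}$ are perturbed, transversality must also be checked in the charts where $\omega=0$, where the parameter derivative takes a different form; and the passage from ``codimension $2$, hence $0$-dimensional'' to ``finite'' should rest on the compactness of $Z\times Z$ and the analyticity of $(F\times F)^{-1}(\Delta_{\mathbb{CP}_{2}})$ there (an analytic subset of a compact space has finitely many irreducible components, so its $0$-dimensional part is finite), not on your ramification heuristic, because a discrete analytic subset of the noncompact $(Z\times Z)\setminus\Delta$ could a priori accumulate on $\Delta$.
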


This result as well as proposition~\ref{P/ generique B} will be proved in a
separate paper because they involve methods and results of complex analysis
which deserve attention of their own. We can now give a generic version of
theorem~\ref{T/ compactR3opt}.

\begin{theorem}
\label{T/ compactR3}Let $Z$\ and $Z^{\prime}$ be a compact connected oriented
smooth surface in $\mathbb{R}^{3}$\ equipped with the conformal structures
induced by the standard euclidean metric of $\mathbb{R}^{3}$. Assume that
$Z\cap Z^{\prime}$ contains a surface $S$ and let $a=\left(  a_{\ell}%
^{-},a_{\ell}^{+}\right)  _{0\leqslant\ell\leqslant2}$ be a 6-uple of mutually
distinct points in $S$. Assume that for some $\left(  c,p,\kappa\right)
\in\mathbb{C}^{3}\times\left(  S^{n}\right)  ^{3}\times\left(  \mathbb{C}%
^{n}\right)  ^{3}$, $\left(  a,c,p,\kappa\right)  \in E_{Z,n}\cap
E_{Z^{\prime},n}$ and $\left(  U_{Z,\ell}^{a,c,p,\kappa}\left\vert
_{S}\right.  \right)  _{0\leqslant\ell\leqslant2}=\left(  U_{Z^{\prime},\ell
}^{a,c,p,\kappa}\left\vert _{S}\right.  \right)  _{0\leqslant\ell\leqslant2}$.
Then $Z$ and $Z^{\prime}$ are isomorphic. Moreover, $Z$ can be explicitly
reconstructed from $\left(  \partial U_{Z,\ell}^{a,c,p,\kappa}\left\vert
_{S}\right.  \right)  _{0\leqslant\ell\leqslant2}$.
\end{theorem}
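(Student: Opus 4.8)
The plan is to realize both $Z$ and $Z'$ as normalizations of one and the same plane algebraic curve, that curve being recovered from the shared Cauchy data on $S$; the argument runs parallel to the one for theorem~\ref{T/ compactR3opt}, the perturbation forms $\partial V_{Z,\ell}^{p,\kappa}$ playing no role beyond being part of the homogeneous coordinates. Write $\phi_{Z,\ell}=\partial U_{Z,\ell}^{a,c,p,\kappa}$ for the three meromorphic $1$-forms building $F_{Z}:=F_{Z}^{a,c,p,\kappa}$, and likewise $\phi_{Z',\ell}$. Since $S\subset Z\cap Z'$ carries the conformal structure induced from $\mathbb{R}^{3}$, the inclusions $S\hookrightarrow Z$ and $S\hookrightarrow Z'$ are holomorphic, so the two Cauchy-Riemann operators coincide on $S$; hence the hypothesis $U_{Z,\ell}^{a,c,p,\kappa}\vert_{S}=U_{Z',\ell}^{a,c,p,\kappa}\vert_{S}$ gives $\phi_{Z,\ell}\vert_{S}=\phi_{Z',\ell}\vert_{S}$ for each $\ell$, whence
$$F_{Z}\big|_{S}=F_{Z'}\big|_{S}.$$
I would first record this identity, together with the fact that membership of $(a,c,p,\kappa)$ in $E_{Z,n}\cap E_{Z',n}$ makes $F_{Z}$ and $F_{Z'}$ well defined on all of $Z$, resp.\ $Z'$, and injective off a finite set.

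The next step is to pass from these maps to plane curves. As $Z$ is compact and $F_{Z}$ holomorphic, Remmert's proper mapping theorem together with Chow's theorem show that $C:=F_{Z}(Z)$ is an algebraic curve in $\mathbb{CP}_{2}$; it is irreducible because $Z$ is connected, and since $F_{Z}$ is generically injective it is birational onto $C$, i.e.\ it is the normalization map, so that $Z\cong\widetilde{C}$. The same holds for $C':=F_{Z'}(Z')$, giving $Z'\cong\widetilde{C'}$. Now the displayed identity yields $F_{Z}(S)=F_{Z'}(S)$, and this common set meets the regular locus of each curve in a nonempty open piece (away from the finite critical and node loci, $F_{Z}\vert_{S}$ is an embedding). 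Two irreducible plane algebraic curves sharing infinitely many points coincide by Bézout, so $C=C'$, and uniqueness of the normalization gives $Z\cong Z'$.

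For the explicit reconstruction I would use only $(\phi_{Z,\ell}\vert_{S})_{0\leqslant\ell\leqslant2}$. These data determine $F_{Z}\vert_{S}$, hence the open arc $\Gamma=F_{Z}(S)\subset\mathbb{CP}_{2}$. The curve $C$ is then recovered as the unique irreducible algebraic curve through $\Gamma$: for the least degree $d$ at which the homogeneous polynomials of degree $d$ vanishing on $\Gamma$ form a nonzero space, that space is one-dimensional, and its generator $P$ satisfies $C=\{P=0\}$, since a polynomial vanishing on the open piece $\Gamma$ of an irreducible curve vanishes identically on it. Finally $Z$ is obtained as the normalization of the nodal curve $\{P=0\}$, which is an explicit desingularization; computing $P$ from $\Gamma$ is a finite linear-algebra problem, which is what makes the recovery effective.

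The main obstacle I expect is the gluing step: making rigorous that $F_{Z}(S)$ is a full-dimensional open piece of $C_{\mathrm{reg}}$ rather than a lower-dimensional or multiply covered set, so that ``infinitely many common points'' genuinely forces $C=C'$. This amounts to controlling the branch and node loci of $F_{Z}$ along $S$, i.e.\ to knowing that $F_{Z}\vert_{S}$ is generically an embedding, which is precisely what membership in $E_{Z,n}$ is designed to guarantee. Once that point is secured, the algebraicity, the Bézout comparison, and the polynomial reconstruction are routine.
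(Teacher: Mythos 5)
Your argument is correct, but it is not the route the paper takes. The paper disposes of Theorem~\ref{T/ compactR3} in one line by observing that the proof of Theorem~\ref{T/ compactR3opt} applies verbatim, and that proof is a reduction to the \emph{bordered} inverse Dirichlet--Neumann machinery: one takes $\gamma=bS$, notes that $(\gamma,u,\theta u)$ built from the restrictions of the potentials to $\gamma$ is a restricted DN-datum for both $Z\setminus S$ and $Z'\setminus S$, applies Theorems~\ref{T/ unicite} and~\ref{T/ looseIDN} (i.e.\ th.~1 and th.~2 of \cite{HeG-MiV2007}) to get an isomorphism $Z\setminus S\to Z'\setminus S$ fixing $bS$, and glues it with $Id_{S}$ by Morera; the reconstruction is then by the Cauchy-type integral formulas of Theorem~\ref{T/ looseIDN}, which use only the boundary data on $\gamma$. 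You instead exploit compactness directly: Remmert and Chow make $C=F_{Z}(Z)$ an irreducible algebraic curve with $Z$ its normalization, the coincidence $F_{Z}|_{S}=F_{Z'}|_{S}$ (which you justify correctly, as does the paper, from the agreement of the conformal structures on $S$) produces a common open piece of $C$ and $C'$, B\'ezout forces $C=C'$, and uniqueness of normalization gives $Z\cong Z'$; the curve is then recovered by linear algebra as the minimal-degree polynomial vanishing on $F_{Z}(S)$. Your approach is more self-contained for the compact case and avoids the bordered IDN theorems entirely, and your worry about $F_{Z}(S)$ being genuinely open in $C_{\mathrm{reg}}$ is resolved exactly as you say by the definition of $E_{Z,n}$ (generic injectivity makes $F_{Z}$ factor as an isomorphism onto $\widetilde{C}$ followed by the open normalization map). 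What it gives up is what the paper's route provides for free: an isomorphism that is the identity on $S$, a reconstruction from the data on $bS$ alone via explicit integral formulas rather than a degree search, and a proof scheme that carries over unchanged to the nodal and bordered settings of Theorems~\ref{T/ bordR3}--\ref{T/ looseIDN}, where Chow's theorem is unavailable.
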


\noindent\textbf{Remarks. 1. }As a consequence, the genus of $Z$ is fully
determined by the data $S$ and $\left(  U_{Z,\ell}^{a,c,p,\kappa}\left\vert
_{S}\right.  \right)  _{0\leqslant\ell\leqslant2}$ here considered but,
meanwhile, a formula has yet to be found.

\textbf{2. }The result apply for compact (generalized) nodal curves if the
potentials are associated to generic admissible families of $Z$ and
$Z^{\prime}$ (see sections~2 and~3 for definitions)\smallskip

Our next theorem is an inverse version for the bordered case of the Riemann
and Klein result. Without electrical charges, it is contained in \cite[th.~1,
th.2]{HeG-MiV2007}. The precise definition Dirichlet-Neumann data and how they
are linked to the Neumann operator is given in the third section. In short,
such a datum consists of a smooth oriented real curve $\gamma$ which is the
boundary of an open complex curve $Z$, a 3-uple $u=\left(  u_{0},u_{1}%
,u_{2}\right)  $ of smooth real functions defined on $\gamma$, a 3-uple
$\theta u=\left(  \theta u_{0},\theta u_{1},\theta u_{2}\right)  $ of smooth
$\left(  1,0\right)  $-forms, each $\theta u_{\ell}$ being the boundary value
of $\partial\widetilde{u_{\ell}}^{c}$ where $\widetilde{u_{\ell}}^{c}$ is the
harmonic extension of $u_{\ell}$ to $Z\backslash\left\{  \left(  a_{j}%
^{-},a_{j}^{+}\right)  ;~1\leqslant j\leqslant\nu\right\}  $ such that
$\partial\widetilde{u_{\ell}}^{c}$ has residue $\pm c_{j}$ at $a_{j}^{\pm}$,
$1\leqslant j\leqslant\nu$. An important but, as seen later, generic (see
proposition~\ref{P/ generique B}), requirement for $\ \left(  \gamma,u,\theta
u\right)  $ to be a Dirichlet-Neumann datum is that the map $\left(
\partial\widetilde{u_{0}}^{c}:\partial\widetilde{u_{1}}^{c}:\partial
\widetilde{u_{2}}^{c}\right)  $ is well defined and injective outside a finite
subset of $Z$ and embeds $\gamma$ into $\mathbb{CP}_{2}$. For the sake of
simplicity and because charges are informations to be recovered, we have
chosen to let them be independent of $\ell$. \smallskip

\begin{theorem}
\label{T/ bordR3}Let $Z$\ and $Z^{\prime}$ be bordered oriented smooth
surfaces in $\mathbb{R}^{3}$\ equipped with the conformal structures induced
by the standard euclidean metric of $\mathbb{R}^{3}$. We fix in $Z$ (resp.
$Z^{\prime}$) $\nu$ (resp. $\nu^{\prime}$) pairs of mutually distinct points
$a_{j}^{\pm}$ (resp. $a_{j}^{\prime\pm}$) in $Z$ (resp. $Z^{\prime}$). We
assign to each pair $a_{j}^{\pm}$ (resp. $a_{j}^{\prime\pm}$) "electrical" non
zero complex charges $\pm c_{j}$ (resp. $\pm c_{j}^{\prime} $) satisfying the
generic conditions $\left\vert c_{j}^{\pm}\right\vert \neq\left\vert
c_{k}^{\pm}\right\vert $ (resp. $\left\vert c_{j}^{\prime\pm}\right\vert
\neq\left\vert c_{k}^{\prime\pm}\right\vert $) $1\leqslant j<k\leqslant\nu$.

We assume that $\left(  \gamma,u,\theta u\right)  $ is a Dirichlet-Neumann
datum for $Z$ where each $a_{j}^{\pm}$ is charged with $c_{j}^{\pm}$ as well
as a Dirichlet-Neumann datum for $Z^{\prime}$ where each $a_{j}^{\prime\pm}$
is charged with $c_{j}^{\pm}$.

Then $\nu=\nu^{\prime}$, $\left(  c_{j}\right)  _{1\leqslant j\leqslant\nu
}=\left(  c_{j}^{\prime}\right)  _{1\leqslant j\leqslant\nu}$ and there is an
isomorphism $\varphi:Z\longrightarrow Z^{\prime}$ of Riemann surfaces such
that $\varphi\left\vert _{\gamma}\right.  =Id_{\gamma}$ and $\varphi\left(
a_{j}^{\pm}\right)  =a_{j}^{\prime\pm}$, $1\leqslant j\leqslant\nu$. Moreover,
$Z$, $\left\{  a_{j}^{\pm};~1\leqslant j\leqslant\nu\right\}  $ and $\left(
c_{j}\right)  _{1\leqslant j\leqslant\nu}$ can be explicitly reconstructed
from $\left(  \gamma,u,\theta u\right)  $.
\end{theorem}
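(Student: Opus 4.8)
The plan is to convert each Dirichlet--Neumann datum into a complex curve of $\mathbb{CP}_{2}$, to observe that the two curves share one real boundary, and then to invoke uniqueness of the complex curve spanning a prescribed boundary. For $Z$, let $\widetilde{u_{\ell}}^{c}$ be the harmonic extensions furnished by the hypotheses and set
\[
F_{Z}=\left(\partial\widetilde{u_{0}}^{c}:\partial\widetilde{u_{1}}^{c}:\partial\widetilde{u_{2}}^{c}\right)\colon Z\to\mathbb{CP}_{2},
\]
and define $F_{Z'}$ in the same way from the corresponding extensions on $Z'$. By the definition of a Dirichlet--Neumann datum each map is well defined, injective off a finite set, and embeds $\gamma$. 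The first step is that $F_{Z}$ and $F_{Z'}$ agree on $\gamma$: the boundary value of $\partial\widetilde{u_{\ell}}^{c}$ is the prescribed $\theta u_{\ell}$ for \emph{both} surfaces, so $F_{Z}\vert_{\gamma}=\left(\theta u_{0}:\theta u_{1}:\theta u_{2}\right)=F_{Z'}\vert_{\gamma}$. I write $C$ for this common closed oriented real curve of $\mathbb{CP}_{2}$; it depends on $\theta u$ alone.

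Next I would check that $X=F_{Z}(Z)$ and $X'=F_{Z'}(Z')$ are complex curves of $\mathbb{CP}_{2}\setminus C$ whose boundaries, as integration currents, both equal $[C]$. Since near $a_{j}^{\pm}$ all three coordinate forms have a simple pole with the \emph{same} residue $\pm c_{j}$ (independent of $\ell$), clearing the pole gives $F_{Z}(a_{j}^{\pm})=\left(1:1:1\right)=:o$ for every $j$; thus, generically, the charged points are exactly the fibre $F_{Z}^{-1}(o)$, and $X$ acquires a singular point at $o$ whose local branches are indexed by the $a_{j}^{\pm}$ --- this is the nodal phenomenon of the title. The decisive step is a uniqueness statement: by the Harvey--Lawson type characterization of boundaries of complex curves, together with the explicit Cauchy/shock-wave reconstruction of a spanning curve from its boundary established in the direct part of the paper and in \cite{HeG-MiV2007}, a complex curve of $\mathbb{CP}_{2}\setminus C$ bounding $C$ is determined by $C$. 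Hence $X=X'$.

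With $X=X'$ in hand the conclusions follow. Both $F_{Z}\colon Z\to X$ and $F_{Z'}\colon Z'\to X$ are birational and injective off finite sets, hence realise $Z$ and $Z'$ as the normalisation of the \emph{same} curve $X$; since the normalisation is unique up to isomorphism, $\varphi=F_{Z'}^{-1}\circ F_{Z}\colon Z\to Z'$ is an isomorphism of Riemann surfaces, and $\varphi\vert_{\gamma}=Id_{\gamma}$ because $F_{Z}$ and $F_{Z'}$ coincide on $\gamma$. The fibres $\{a_{j}^{\pm}\}=F_{Z}^{-1}(o)$ and $\{a_{j}^{\prime\pm}\}=F_{Z'}^{-1}(o)$ correspond under $\varphi$, so $\nu=\nu'$ and charged points go to charged points. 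Finally the charges are intrinsic to $X$: each $\pm c_{j}$ is the residue $\operatorname*{Res}_{a_{j}^{\pm}}\partial\widetilde{u_{\ell}}^{c}$, read either from a small loop transported to $\gamma$ by Stokes or from the rate at which the corresponding branch of $X$ approaches $o$, and is therefore the same quantity computed on $Z$ and on $Z'$. The genericity hypothesis $\vert c_{j}\vert\neq\vert c_{k}\vert$ then separates the branches at $o$ by their charge magnitudes, so the matching $a_{j}^{\pm}\mapsto a_{j}^{\prime\pm}$ is unambiguous and $(c_{j})=(c_{j}^{\prime})$. Since $C$, $X$, its normalisation and these residues are all built canonically from $(\gamma,u,\theta u)$, this also yields the asserted explicit reconstruction.

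I expect the main obstacle to be the uniqueness of the spanning complex curve in the presence of the singular point $o$: one must ensure that the boundary problem $bX=C$ admits, among admissible nodal curves, a single solution, ruling out spurious compact components and controlling the exact branch structure at $o$. Making the reconstruction genuinely explicit rather than merely unique relies on the inversion of the direct problem and on verifying its insensitivity to the normalisation of the charges; the technically delicate point is precisely that all $2\nu$ charged points collapse to one projective point $o$, so that recovering the individual $c_{j}$ must be done from the local geometry of $X$ at $o$ (or from boundary residue integrals) rather than from the location of their images.
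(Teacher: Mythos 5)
Your proposal is correct in substance and follows the same underlying route as the paper, only with the scaffolding removed: the paper's own proof of this theorem is two lines --- it forms the nodal curves $\mathcal{X}$ and $\mathcal{X}^{\prime}$ by identifying $a_{j}^{+}$ with $a_{j}^{-}$ (so the bipolar potentials become harmonic distributions whose admissible family at each node is $\left(  c_{j},-c_{j}\right)  $) and then cites theorem~\ref{T/ unicite} for uniqueness and theorem~\ref{T/ looseIDN} for reconstruction; those two theorems are proved by exactly your chain of ideas (push forward by $F$ to a curve $Y$ of $\mathbb{CP}_{2}$ with $d\left[  Y\right]  =\left[  f\left(  \gamma\right)  \right]  $, uniqueness of the spanning chain modulo compact components, normalization, residue detection of the charged branches). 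Two points where your shortcut leans on that machinery. First, identifying the charged points with the fibre $F_{Z}^{-1}\left(  1:1:1\right)  $ is too quick: (B4) only makes $F$ injective off a finite set $A$, which may meet that fibre, so $F_{Z}^{-1}\left(  o\right)  $ can contain uncharged points and other singularities of $Y$ can carry spurious branches; the load-bearing criterion is that of theorem~\ref{T/ looseIDN}(4), namely a local branch $C$ of $Y$ is the image of a charged branch if and only if $\int_{\partial C}\partial F_{\ast}\widetilde{u_{\ell}}^{c}\neq0$, the integral then returning $2\pi i$ times the charge --- you gesture at this but it is the step that actually does the work. Second, your use of $\left\vert c_{j}\right\vert \neq\left\vert c_{k}\right\vert $ is the right mechanism: it makes the $2\nu$ residues $\pm c_{j}$ pairwise distinct, so the zero-sum pairing $\left\{  +c_{j},-c_{j}\right\}  $ is the unique admissible one and the matching $a_{j}^{\pm}\mapsto a_{j}^{\prime\pm}$ is forced; this is precisely the role of generic admissible families in theorem~\ref{T/ unicite}(3) and in remark~4 after theorem~\ref{T/ looseIDN}, specialized to nodes with two branches. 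What your presentation adds is the explicit (and correct) observation, not stated in the paper, that because the charges are taken independent of $\ell$ all $2\nu$ charged points are sent to the single point $\left(  1:1:1\right)  $, so the whole nodal structure must indeed be disentangled from the residues at that one point rather than from the geometry of distinct images.
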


The proof of the theorem~\ref{T/ compactR3} is given in section~3.2. Because
the pairs $\left\{  a_{j}^{-},a_{j}^{+}\right\}  $ can be seen as the
singularities of a nodal curve, theorem~\ref{T/ bordR3} is a consequence of
theorems~\ref{T/ unicite} and~\ref{T/ looseIDN} which deal with (generalized)
nodal curves. Its proof is given at the end of section~3 where are stated our
main theorems about inverse problems. Section~2 is devoted to the definition
of nodal surfaces, harmonic distributions and to Dirichlet problems. A
characterization of nodal curve Dirichlet-Neumann data is given in section~4.

\section{Direct problems for compact and nodal curves}

\subsection{Nodal curves}

In this paper, an \textit{open bordered Riemann surface} is the interior of a
one dimensional compact complex manifold with boundary whose all connected
components have non trivial one real dimensional smooth boundary.
An\textit{\ open bordered nodal curve }$X$ is the quotient of an open
bordered\textit{\ }Riemann surface $Z$ by an equivalence relation identifying
a finite number of interior points$~$; $Z$ is said to be \textit{above} $X$.
We define likewise \textit{compact nodal curve }but for that case we require
the connectedness of the compact Riemann surface.

The points of the singular set $\operatorname*{Sing}X$ of a nodal curve $X$
are called nodes. As a consequence, the irreducible components of $X$ at one
of its node are germs of Riemann surfaces. If $a$ is any point of $X$, we call
the \textit{branches} of $X$ at $a$ any family $\left(  X_{a,j}\right)
_{1\leqslant j\leqslant\nu\left(  a\right)  }$ of connected Riemann surfaces
meeting only at $a$ and whose union is a relatively compact neighborhood of
$a$ in $X$. Thus, the nodes of $X$ are the points $a$ of $X$ where $\nu\left(
a\right)  \geqslant2$. When $\nu=2$ at each node of $X$, a restriction not
relevant for our theorems, $X$ is a nodal curve\textit{\ }as often defined in
the literature. That's why we omit most of the time to add the word
\textit{generalized }for the nodal curves we consider. When $X$ is compact,
has a trivial group of automorphisms and $\nu\equiv2$, $X$ is called
\textit{stable} (see e.g. \cite{Ha-Mo1998Li}).

In the sequel $X_{a,j}^{\ast}$ is a notation for $X_{a,j}\backslash\left\{
a\right\}  $. The boundary of $X$ is denoted by $bX$; by definition
$\overline{X}=X\cup bX$ is outside $\operatorname*{Sing}X$ a manifold with
boundary~; its regular part $\operatorname*{Reg}\overline{X}$ is
$bX\cup\operatorname*{Reg}X$ where $\operatorname*{Reg}X=X\backslash
\operatorname*{Sing}X$. Note that when $X$ is an open bordered nodal curve and
$\overline{\widehat{X}}\overset{\pi}{\longrightarrow}\overline{X}$
($\widehat{X}\overset{\pi}{\longrightarrow}X $ for short but with a slight
abuse of notation) is one of its normalization, $\widehat{X}$ is an open
bordered Riemann surface and $\pi^{-1}\left(  \operatorname*{Sing}X\right)  $
is finite.

Two open bordered nodal curves $X$ and $X^{\prime}$ are said
\textit{isomorphic} if there exists a bijective map $\varphi:\overline
{X}\longrightarrow\overline{X^{\prime}}$ which is an isomorphism of Riemann
surfaces from $\operatorname*{Reg}X$ onto $\operatorname*{Reg}X^{\prime}$, a
diffeomorphism of manifolds with boundary between some open neighborhoods of
$bX$ and $bX^{\prime}$ in $\overline{X}$ and $\overline{X^{\prime}}$ and such
that for each node $a$ of $X$, the (germs of) branches of $X^{\prime}$ at
$\varphi\left(  a\right)  $ are the images by $\varphi$ of the (germs of)
branches of $X$ at $a$. In particular, such a map $\varphi$ has to be an homeomorphism.

A weaker notion of equivalence between nodal curves appears naturally in this
paper. If the above map $\varphi$ has the first two properties but only send
bijectively the set (of germs) of branches of $X$ to the set of (of germs) of
branches of $X^{\prime}$, we say that $X$ and $X^{\prime}$ are
\textit{roughly} \textit{isomorphic.} Assuming $X$ (resp. $X^{\prime}$) is the
quotient of an open bordered Riemann surface $Z$ (resp. $Z^{\prime}$) and that
$Z\overset{\pi}{\longrightarrow}X$ (resp. $Z^{\prime}\overset{\pi
}{\longrightarrow}X^{\prime})$ is the natural projection, another way to state
this is to ask for an isomorphism of open bordered Riemann surface from
$\overline{Z}$ onto $\overline{Z^{\prime}}$ which sends $\pi^{-1}\left(
\operatorname*{Sing}X\right)  $ onto $\pi^{\prime-1}\left(
\operatorname*{Sing}X^{\prime}\right)  $.

In order to settle accurate Dirichlet problems, we define in the next
subsection what is a harmonic distribution on a nodal curve.

\subsection{Harmonic distributions}

When a nodal curve $X$ is an analytic subset of some open set in an affine
space, one may agree to define smooth functions as restrictions to $X$ of
smooth functions of the ambient space. In the particular simple case where $X
$ is the union of the lines $\mathbb{C}\left(  1,0\right)  $ and
$\mathbb{C}\left(  0,1\right)  $, it appears that a function is smooth on $X$
if and only if it is smooth on each branch of $X$ and is continuous at the
singular point of $X$. Having in mind that every open bordered nodal curve can
be embedded in an affine complex space (see \cite{WiK1966}), we take this
model as a guideline for a general definition. Translating in analytic words
the algebraic definitions of \cite{RoM1954}, \cite{GrA1958} and \cite{SeJ1959}
would have given the same result.

Let $X$ be a (generalized) nodal curve, $W$ an open set of $X$ and
$r\in\left[  0,+\infty\right]  $. A function $u$ on $W$ is said to be of class
$C^{r}$ if it is continuous and if for any branch $B$ of $X$ contained in $W$,
$u\left\vert _{B}\right.  \in C^{r}\left(  B\right)  $~; $\ $the space of such
functions is denoted by $C^{r}\left(  W\right)  $ or $C_{0,0}^{r}\left(
W\right)  $.

If $p,q\in\left\{  0,1\right\}  $ and $p+q>0$, a $\left(  p,q\right)  $-form
$\omega$ of $C_{p,q}^{r}\left(  W\cap\operatorname*{Reg}X\right)  $ is said to
be of class $C^{r}$ on $W$ if for any branch $B$ of $X$ contained in $W$,
$\omega\left\vert _{B\cap\operatorname*{Reg}X}\right.  $ extends as an element
of $C_{p,q}^{r}\left(  B\right)  $. The space of such forms is denoted by
$C_{p,q}^{r}\left(  W\right)  $. Note that the question of continuity at nodes
of a form is relevant only when it is a function since the tangent spaces of
branches of $X$ at a\ same node may be different.

If $K$ is a compact subset of $X$ and $p,q\in\left\{  0,1\right\}  $, the
space $C_{p,q}^{\infty}\left(  K\right)  $ of smooth $\left(  p,q\right)
$-forms supported in $K$ is equipped with the topology induced by the
semi-norms $\underset{_{B\cap K}}{\sup}\left\Vert D^{\left(  m\right)  }%
\omega\left\vert _{B}\right.  \right\Vert $ where $m$ is any positive integer,
$B$ any branch of $X$ meeting $K$ and the differential $D$ is the total
differential acting on coefficients. The space $D_{p,q}\left(  W\right)  $ of
smooth $\left(  p,q\right)  $-forms compactly supported in $W$ is equipped
with the inductive limit topology of the spaces $C_{p,q}^{\infty}\left(
K\right)  $ where $K$ is any compact of $W$. The space $D_{p,q}^{\prime
}\left(  W\right)  $ of currents on $W$ of bidegree $\left(  p,q\right)  $ is
the topological dual of $D_{p,q}\left(  W\right)  ~$; the elements of
$D_{1,1}^{\prime}\left(  W\right)  $ are the distributions on $W$.

The exterior differentiation $d$ of smooths forms is well defined along
branches of $X$, so it is for $\partial$ and $\overline{\partial}$. These
operators extend to currents by duality.

A distribution $u\in D_{1,1}^{\prime}\left(  W\right)  $ is \textit{(weakly)
harmonic} if, by definition, the current $i\partial\overline{\partial}u$
vanish, that is $\left\langle i\partial\overline{\partial}u,\varphi
\right\rangle =0$ for all $\varphi\in C_{c}^{\infty}\left(  W\right)  $; it is
equivalent to ask for $\partial u$ to be (weakly) holomorphic in the sense of
Rosenlicht~\cite{RoM1954}. Such distributions are usual harmonic functions
near regular points. While an harmonic function on $\operatorname*{Reg}X$ may
have heavy singularities at a node, the following proposition shows that
harmonic distributions have at most logarithmic singularities which somehow
compensates together.

\begin{proposition}
[Characterization of harmonic distributions]\label{SingLog}Let $X$ be a nodal
curve. Assume that $u$ is a harmonic distribution on a neighborhood of some
point $a$ in $X$. Let $\left(  X_{a,j}\right)  _{1\leqslant j\leqslant
\nu\left(  a\right)  }$ be the branches of $X$ at $a$. Assume that these
branches are small enough for there exists on each $X_{a,j}$ a holomorphic
coordinate $z_{j}$ centered at $a$. Then, there exists a family $\left(
c_{a,j}\right)  _{1\leqslant j\leqslant\nu\left(  a\right)  }$ of complex
numbers such that $u\left\vert _{X_{a,j}^{\ast}}\right.  -2c_{a,j}%
\ln\left\vert z_{j}\right\vert $ extends as a usual harmonic function near $a$
in $X_{a,j}$ and $%
{\displaystyle\sum\limits_{1\leqslant j\leqslant\nu\left(  a\right)  }}
c_{a,j}=0$. In particular, $\partial u$ is a meromorphic (1,0)-form whose
singularities are simple poles at nodes of $X$ and has residue $c_{a,j}$ along
$X_{a,j}$ when $a\in\operatorname*{Sing}X$. Conversely, $u$ is a harmonic
distribution if $\partial u$ and $\left(  c_{a,j}\right)  $ are such.
\end{proposition}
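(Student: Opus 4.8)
The plan is to read everything off the local distributional equation $i\partial\overline{\partial}u=0$, using crucially that a test function on $X$ must be \emph{continuous across the node} while being smooth on each branch. First I would localize: choosing the branches small, $X_{a,j}\cong\{|z_j|<r\}$ and $X_{a,j}^{\ast}\cong\{0<|z_j|<r\}$ consists of regular points, so $u_j:=u|_{X_{a,j}^{\ast}}$ is a classical harmonic function on the punctured disc and, being a piece of the distribution $u$, is locally integrable up to the node. Consequently $i\partial\overline{\partial}u_j$, computed on the full disc $\{|z_j|<r\}$, is a distribution supported at the single point $a$.

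The heart of the argument is to pin down this point-supported distribution. For a test function $\varphi$ on $X$ that vanishes near $a$ on every branch except the $j$-th and satisfies $\varphi(a)=0$ (such a $\varphi$ is automatically continuous at the node, all branch values at $a$ being $0$, hence admissible), harmonicity gives $0=\langle i\partial\overline{\partial}u,\varphi\rangle=\langle i\partial\overline{\partial}u_j,\varphi_j\rangle$. Thus the point-supported distribution $i\partial\overline{\partial}u_j$ annihilates every test function vanishing at the center; by the structure theorem for distributions supported at a point this forces it to be a multiple of the Dirac mass, $i\partial\overline{\partial}u_j=\pi b_j\,\delta_a$ for some constant $b_j$ (the derivative-of-$\delta$ terms are excluded precisely because they would detect test functions with $\varphi(a)=0$ but nonzero jet). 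Comparing with the fundamental solution $i\partial\overline{\partial}\ln|z_j|=\pi\delta_a$ and invoking elliptic regularity, the function $u_j-2c_{a,j}\ln|z_j|$ (with $b_j=2c_{a,j}$) is distributionally harmonic on the whole disc, hence a genuine harmonic function extending across $a$. This simultaneously yields the logarithmic normal form on each branch and shows $\partial u_j=\tfrac{c_{a,j}}{z_j}dz_j+\partial h_j$, so that $\partial u$ is meromorphic with at most simple poles at the nodes and residue $c_{a,j}$ along $X_{a,j}$, and holomorphic elsewhere since $u$ is smooth harmonic at regular points.

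It remains to get $\sum_j c_{a,j}=0$, which is exactly the cancellation of the logarithmic fluxes at the node. For this I would test against an admissible $\varphi$ that is \emph{equal to the same nonzero constant near $a$ on all branches} (continuity at the node is what lets the branches talk to each other here). Then $0=\langle i\partial\overline{\partial}u,\varphi\rangle=\sum_{j}\langle i\partial\overline{\partial}u_j,\varphi\rangle=\pi\varphi(a)\sum_{j}b_j$, whence $\sum_j c_{a,j}=0$. The converse runs the same computation backwards: if on each branch $u-2c_{a,j}\ln|z_j|$ is harmonic and $\sum_j c_{a,j}=0$, then $u$ is locally integrable, and for any admissible $\varphi$ one obtains $\langle i\partial\overline{\partial}u,\varphi\rangle=\pi\varphi(a)\sum_j b_j=0$, so $u$ is a harmonic distribution.

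The step I expect to be the main obstacle is the local distributional analysis at the node: justifying that the integration by parts producing $\langle i\partial\overline{\partial}u,\varphi\rangle=\sum_j\langle i\partial\overline{\partial}u_j,\varphi_j\rangle$ is legitimate, i.e. that $u$ carries no singular distributional part beyond the integrable branch functions, so that the support-at-a-point structure theorem genuinely applies, together with fixing the correct normalizing constants and orientations in the identity $i\partial\overline{\partial}\ln|z_j|=\pi\delta_a$. Once the point-supported distribution is shown to be a pure Dirac mass, the logarithmic normal form, the meromorphicity of $\partial u$ with simple poles, the residue identification and the relation $\sum_j c_{a,j}=0$ all follow cleanly, and the equivalence with weak holomorphy of $\partial u$ in Rosenlicht's sense is then just a restatement.
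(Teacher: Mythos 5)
Your overall route is essentially the paper's: decompose the analysis branch by branch, exploit the fact that a test function on a nodal curve is only required to be \emph{continuous} at the node (so its value there is constrained but its jet on each branch is free) to kill all derivative-of-$\delta$ terms, and then test against a function constant near the node to force $\sum_j c_{a,j}=0$. Your appeal to the structure theorem for point-supported distributions is a legitimate and somewhat slicker substitute for the paper's explicit computation, which pairs $u_j$ against the forms $\chi\left(\frac{|z|}{\varepsilon}\right)\left(\frac{z}{|z|}\right)^{-p}\frac{i}{2}dz\wedge d\overline{z}$ and bounds the Laurent coefficients of $\partial v_j/\partial z$ by the order of the distribution; both devices serve only to guarantee that $i\partial\overline{\partial}u_j$ is a \emph{finite} combination of derivatives of $\delta_0$ before the jet argument is applied.

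The one genuine flaw is the assertion that $u|_{X_{a,j}^{\ast}}$, ``being a piece of the distribution $u$, is locally integrable up to the node.'' This is neither true a priori nor a consequence of the existence of a distributional extension: $\operatorname{Re}(z^{-2})$ is harmonic on the punctured disc, is not in $L^{1}_{loc}$ near $0$, and nevertheless extends to the disc as a distribution (a Hadamard finite part). Local integrability is a \emph{conclusion} of the proposition --- it follows only once the singularity is shown to be logarithmic --- so using it to obtain the point-supportedness of $i\partial\overline{\partial}u_j$ is circular. The correct device, and the one the paper uses, is to define $u_j:=u\circ E_j$, where $E_j$ extends a test $(1,1)$-form on the branch by zero to the other branches; this is a bona fide distribution on the full disc agreeing with the given harmonic function off the node, so $i\partial\overline{\partial}u_j$ is supported at the node by construction, with no integrability input. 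One must then also observe, as the paper does, that for an admissible test function $\chi$ on $X$ one has $i\partial\overline{\partial}\chi=\sum_j E_j(i\partial\overline{\partial}\chi_j)$ --- legitimate because $(1,1)$-forms, unlike functions, carry no continuity constraint at nodes --- which is exactly the decomposition $\langle i\partial\overline{\partial}u,\chi\rangle=\sum_j\langle i\partial\overline{\partial}u_j,\chi_j\rangle$ that you yourself flagged as the main obstacle. With that substitution your argument closes.
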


\noindent\textbf{Remark. }The condition that the singularities of $\partial u$
are only simple poles with vanishing sums of residues at each node
characterizes that $\partial u$ is (weakly) holomorphic. This fact match the
definition of dualizing sheaves given by Grothendieck in~\cite{GrA1958} and
Hartshone in~\cite{HaR166Li} which is an algebraic point of view for (weakly)
holomorphic forms.

\begin{proof}
[Proof]For each $k$, we assume that $X_{a,k}$ is small enough so that $z_{k}$
is bijective from $X_{a,k}$ onto $\mathbb{D}=D\left(  0,1\right)  $ and we fix
some $\xi_{k}\in C_{c}^{\infty}\left(  X_{a,k}\right)  $ such that $\xi_{k}=1$
in a neighborhood of $a$ in $X_{a,k}$. Consider some $j$ in $\left\{
1,..,\nu\left(  a\right)  \right\}  $. The extension operator which to
$\chi\in D_{1,1}\left(  X_{a,j}\right)  $ associates the form $E_{j}\chi$
defined by $\left(  E_{j}\chi\right)  \left\vert _{X_{a,k}}\right.  =0$ if
$k\neq j$ and $\left(  E_{j}\chi\right)  \left\vert _{X_{a,j}}\right.  =\chi$
is a continuous operator from $D_{1,1}\left(  X_{a,j}\right)  $ to
$D_{1,1}\left(  W\right)  $ where $W=\underset{1\leqslant j\leqslant\nu\left(
a\right)  }{\cup}X_{a,j}$. Hence, $u_{j}=u\circ E_{j}$ is a distribution on
$X_{a,j}$. Since $u{}\left\vert _{X_{a,j}^{\ast}}\right.  $ is a usual
harmonic function, $v_{j}=u_{j}\circ z_{j}{}^{-1}$ is a usual harmonic
function in $\mathbb{D}^{\ast}=\mathbb{D}\backslash\left\{  0\right\}  $ which
extends to $\mathbb{D}$ as a distribution. This is possible only if the
holomorphic function $\frac{\partial v_{j}}{\partial z}$ hasn't an essential
singularity at $0$. Indeed, consider $\varepsilon\in\left]  0,1\right[  $,
$\xi\in C_{c}^{\infty}\left(  \left]  \frac{1}{4},\frac{3}{4}\right[
,\mathbb{R}_{+}^{\ast}\right)  $, $p\in\mathbb{Z}\cap\left]  -\infty
,-2\right]  $, $\chi_{\varepsilon}=\chi\left(  \frac{\left\vert z\right\vert
}{\varepsilon}\right)  \left(  \frac{z}{\left\vert z\right\vert }\right)
^{-p}\frac{i}{2}dz\wedge d\overline{z}$ and $\chi_{\varepsilon,j}%
=(z_{j})^{\ast}\chi_{\varepsilon}$. Then $E_{j}\chi_{\varepsilon,j}=0$ on
$\underset{k\neq j}{\cup}X_{a,k}$ and on $\left\{  \left\vert z_{j}\right\vert
\leqslant\frac{\varepsilon}{4}\right\}  $. Hence if\ the Laurent series of
$\frac{\partial v_{j}}{\partial z}$ is $(\Sigma c_{j,n}z^{n})$, we get
\begin{align*}
\left\langle v_{j},\chi_{\varepsilon}\right\rangle  &  =\left\langle
u_{j},\chi_{\varepsilon,j}\right\rangle =\left\langle u,E_{j}\chi
_{\varepsilon,j}\right\rangle =\int_{\left\{  \frac{\varepsilon}{4}%
\leqslant\left\vert z_{j}\right\vert \leqslant\frac{3\varepsilon}{4}\right\}
}u\chi_{\varepsilon}\\
&  =%
{\displaystyle\sum\limits_{n\in\mathbb{Z}}}
\int_{\varepsilon/4}^{3\varepsilon/4}\int_{0}^{2\pi}c_{j,n}\chi\left(
\frac{r}{\varepsilon}\right)  e^{i\pi\left(  n-p\right)  \chi}r^{n+1}drd\chi\\
&  =2\pi c_{j,p}\int_{\varepsilon/4}^{3\varepsilon/4}\chi\left(  \frac
{r}{\varepsilon}\right)  r^{p+1}dr=I_{p}c_{j,p}\varepsilon^{p+2}%
\end{align*}
where $I_{p}=2\pi\int_{1/4}^{3/4}\chi\left(  s\right)  s^{p+1}ds$. On the
other hand, the fact that $v_{j}$ is a distribution on $\mathbb{D}$ implies
that there exists $\left(  C_{j},n_{j}\right)  \in\mathbb{R}_{+}%
\times\mathbb{N}$ such that for any $\theta\in C_{c}^{\infty}\left(  \frac
{3}{4}\overline{\mathbb{D}}\right)  $, $\left\vert \left\langle u_{j}%
,\theta\right\rangle \right\vert \leqslant C_{j}\underset{0\leqslant
m\leqslant n_{j}}{\sup}\left\Vert D^{m}\theta\right\Vert _{\infty}$. As
$\left\Vert \chi_{\varepsilon}\right\Vert _{n_{j}}\leqslant\frac
{Cte}{\varepsilon^{n_{j}}}$, the above last equality implies that $c_{j,p}=0$
if $p<-d_{j}=-n_{j}-2$. Thus, if $\delta_{0}$ denotes the Dirac measure,
$\frac{\partial^{2}v_{j}}{\partial z\partial\overline{z}}=\underset{1\leqslant
n\leqslant d_{j}}{\Sigma}\widetilde{c}_{j,-n}\frac{\partial^{n-1}\delta_{0}%
}{\partial z^{n-1}}$ with $\widetilde{c}_{j,-n}=\pi\frac{\left(  -1\right)
^{n-1}}{\left(  n-1\right)  !}c_{j,-n}$ since $\frac{\partial}{\partial
\overline{z}}\frac{1}{z}=\pi\delta_{0}$.

Assume now that $\chi$ is any smooth function compactly supported in
$\underset{k}{\cap}\left\{  \xi_{k}=1\right\}  $. It follows from the
definition that the $\left(  1,1\right)  $-form $i\partial\overline{\partial
}\chi$ can be written as the sum of the smooth forms $E_{j}(i\partial
\overline{\partial}\chi_{j})$ where $\chi_{j}=\chi\left\vert _{X_{a,j}%
}\right.  $. Hence, setting $\theta_{j}=\chi_{j}\circ z_{j}{}^{-1}$, we get%
\begin{align*}
0  &  =\left\langle i\partial\overline{\partial}u,\chi\right\rangle
=\left\langle u,i\partial\overline{\partial}\chi\right\rangle =%
{\displaystyle\sum\limits_{1\leqslant j\leqslant\nu\left(  a\right)  }}
\left\langle u,E_{j}(i\partial\overline{\partial}\chi_{j})\right\rangle \\
&  =%
{\displaystyle\sum\limits_{1\leqslant j\leqslant\nu\left(  a\right)  }}
\left\langle u_{j},i\partial\overline{\partial}\chi_{j}\right\rangle =%
{\displaystyle\sum\limits_{1\leqslant j\leqslant\nu\left(  a\right)  }}
\left\langle i\partial\overline{\partial}v_{j},\theta_{j}\right\rangle \\
&  =\chi\left(  0\right)
{\displaystyle\sum\limits_{1\leqslant j\leqslant\nu\left(  a\right)  }}
\widetilde{c}_{j,-1}+%
{\displaystyle\sum\limits_{1\leqslant j\leqslant\nu\left(  a\right)  }}
\,%
{\displaystyle\sum\limits_{2\leqslant n\leqslant d_{j}}}
\widetilde{c}_{j,-n}\frac{\partial^{n-1}\theta_{j}}{\partial z^{n-1}}\left(
0\right)
\end{align*}
As $\left(  \frac{\partial^{m}\theta_{j}}{\partial z^{m}}\left(  0\right)
\right)  _{m\geqslant1}$ can be any sequence of complex numbers, the above
equality implies that $c_{j,-n}=0$ when $n\leqslant2$ and $%
{\displaystyle\sum\limits_{1\leqslant j\leqslant\nu\left(  a\right)  }}
c_{j,-1}=0$. As the converse statement of the proposition is clear, the proof
is achieved.
\end{proof}

\subsection{Green functions and Dirichlet problems}

As our proofs use principal Green functions for smooth curves and because
inverse problems require constructive methods, we take the opportunity in this
paper to recall how these functions can be build with constructive tools.
Green conjectured in 1828 the existence of such function for domains in
$\mathbb{R}^{3}$. We recall that a Green function for an open bordered Riemann
surface $Z$ is a symmetric function $g$ defined on $\overline{Z}%
\times\overline{Z}$ without its diagonal such that for any $z\in Z$,
$g_{z}=g\left(  .,z\right)  $ is harmonic on $Z\backslash\left\{  z\right\}
$, smooth on $\overline{Z}\backslash\left\{  z\right\}  $ and has singularity
$\frac{1}{2\pi}\ln\operatorname{dist}\left(  .,z\right)  $ at $z$, the
distance being computed in any hermitian metric on $Z$. It is called principal
if $g_{z}\left\vert _{bZ}\right.  =0$ for any $z\in Z$. The existence of Green
functions for smoothly bordered Riemann surfaces results from classical works
of Fredholm and Hilbert. In \cite{HeG-MiV2012}, an explicit construction has
been derived from Cauchy type formulas even for singular Riemann surfaces. The
theorem below recall how to get a principal Green function from a mundane
one.\smallskip

\noindent\textbf{Theorem}.\textit{\ Let }$Z$\textit{\ be an open bordered
Riemann surface and }$g$\textit{\ a Green function for }$Z$\textit{. We assume
}$\overline{Z}$ \textit{to be contained in some complex curve }$\widetilde{Z}%
$, \textit{e.g. its double. Consider the operator }$T:C^{\infty}\left(
\gamma\right)  \longrightarrow C^{\infty}\left(  \widetilde{Z}\backslash
\gamma\right)  $ \textit{defined by} $Tv:z\mapsto2i\int_{\gamma}%
v\overline{\partial}g_{z}$. \textit{If }$v\in C^{\infty}\left(  \gamma\right)
$\textit{, we denote by }$T^{+}v$\textit{\ the restriction of }$Tv$%
\textit{\ to }$Z^{+}=Z$\textit{\ and by }$T^{-}v$\textit{\ the restriction of
}$Tv$\textit{\ to} $Z^{-}=\widetilde{Z}\backslash Z$. \textit{Then, the
following holds}

\textbf{1. }\textit{For any }$v\in C^{\infty}\left(  \gamma\right)  $\textit{,
}$T^{\pm}v$\textit{\ is harmonic on }$Z^{\pm}$ \textit{and extends
continuously to }$\gamma$.

\textbf{2. }\textit{(Sohotsky-1873 when} $Z\subset\mathbb{C}$) \textit{For any
}$v\in C^{\infty}\left(  \gamma\right)  $, $v=\left(  T^{+}v\right)
\left\vert _{\gamma}\right.  -\left(  T^{-}v\right)  \left\vert _{\gamma
}\right.  $.

\textbf{3. }\textit{(Fredholm-1899 when} $Z\subset\mathbb{C}$) \textit{For any
}$u\in C^{\infty}\left(  \gamma\right)  $,\textit{\ the unique harmonic
extension }$Eu$\textit{\ of }$u$\textit{\ to }$Z$\textit{\ is the solution
}$v$\textit{\ of the integral equation} $u=v+\left(  T^{-}v\right)  \left\vert
_{\gamma}\right.  $.

\textbf{4. }\textit{The principal Green function for }$Z$\textit{\ is the
function }$G$\textit{\ defined by }$G\left(  z,\zeta\right)  =g\left(
z,\zeta\right)  -\left(  Eg_{z}\left\vert _{\gamma}\right.  \right)  \left(
\zeta\right)  $\textit{\ for all }$\left(  z,\zeta\right)  \in Z^{2}%
$\textit{\ with }$z\neq\zeta$\textit{.}\smallskip

In the sequel, an \textit{admissible family} for an open bordered nodal curve
$X$ is a family $\left(  c_{a,j}\right)  _{a\in\operatorname*{Sing}%
X,~1\leqslant j\leqslant\nu\left(  a\right)  }$ of complex numbers such that
for each node $a$ of $X$, $%
{\displaystyle\sum\limits_{1\leqslant j\leqslant\nu\left(  a\right)  }}
c_{a,j}=0$. The following proposition generalizes the initial statement of
Riemann and Klein given in the introduction. It is a\ consequence of the above
classical result and proposition~\ref{SingLog}.

\begin{proposition}
[Solution of the nodal Dirichlet problem]\label{Dir}Let $X$ be an open
bordered or compact nodal curve, $c=\left(  c_{a,j}\right)  _{a\in
\operatorname*{Sing}X,~1\leqslant j\leqslant\nu\left(  a\right)  }$ an
admissible family and for each sufficiently small branch $X_{a,j}$ at a node
$a$ in $X$, let us fix some holomorphic coordinate $z_{j}$ for $X_{a,j}$
centered at $a$. If $X$ is non compact, we also fix $u\in C^{0}\left(
bX\right)  $. Then there exists a unique (up to an additive constant if $X$ is
compact) harmonic distribution $\widetilde{u}^{c}$ on $X$ such that
$\widetilde{u}^{c}\left\vert _{bX}\right.  =u$ (if $X$ is non compact) and
$\widetilde{u}^{c}\left\vert _{X_{a,j}^{\ast}}\right.  -2c_{a,j}\ln\left\vert
z_{j}\right\vert $ extends as a usual harmonic function near $a$ in $X_{a,j}$.
Equivalently, $\widetilde{u}^{c}$ is the harmonic distribution $U$ extending
$u$ to $X$ such that $\partial U$ is a meromorphic (1,0)-form whose
singularities are simple poles at nodes of $X$ with residue $c_{a,j}$ along
$X_{a,j}$ when $a\in\operatorname*{Sing}X$.
\end{proposition}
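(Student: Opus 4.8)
The plan is to lift the problem to a normalization $\pi:Z\to X$, where $Z$ is a genuine smooth Riemann surface (open bordered, resp. compact connected), to solve there a classical Dirichlet problem with logarithmic sources, and then to push the solution back down, using admissibility of $c$ to guarantee that the result is a \emph{harmonic distribution} on $X$. For each node $a$ write $\pi^{-1}(a)=\left\{p_{a,1},\dots,p_{a,\nu(a)}\right\}$, where $p_{a,j}$ lies on the branch $X_{a,j}$, so the coordinate $z_{j}$ centered at $a$ on $X_{a,j}$ is a holomorphic coordinate centered at $p_{a,j}$ on $Z$. By proposition~\ref{SingLog}, producing a harmonic distribution $\widetilde{u}^{c}$ on $X$ with the prescribed logarithmic behaviour amounts to producing a function $V$, harmonic on $Z\backslash\pi^{-1}\left(\operatorname*{Sing}X\right)$, with $V\left\vert _{bZ}\right.=u\circ\pi$ in the non compact case, and such that $V-2c_{a,j}\ln\left\vert z_{j}\right\vert$ extends harmonically across each $p_{a,j}$. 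Since the $c_{a,j}$ and $u$ may be complex, I keep them so: the real Green function below is used $\mathbb{C}$-linearly.

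For existence in the non compact case, let $G$ be the principal Green function of $Z$ furnished by part~4 of the classical theorem recalled above, and set
\[
V_{0}=\sum_{a\in\operatorname*{Sing}X}\ \sum_{1\leqslant j\leqslant\nu\left(a\right)}4\pi\,c_{a,j}\,G\left(\cdot,p_{a,j}\right).
\]
Because $G\left(\cdot,p_{a,j}\right)-\tfrac{1}{2\pi}\ln\left\vert z_{j}\right\vert$ is bounded and harmonic off $p_{a,j}$, it extends harmonically across $p_{a,j}$; hence $V_{0}$ is harmonic off $\pi^{-1}\left(\operatorname*{Sing}X\right)$, vanishes on $bZ$ by principality, and $V_{0}-2c_{a,j}\ln\left\vert z_{j}\right\vert$ is harmonic near each $p_{a,j}$, with $\partial\left(2c_{a,j}\ln\left\vert z_{j}\right\vert\right)=c_{a,j}\,dz_{j}/z_{j}$ giving residue $c_{a,j}$. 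It then remains to add the harmonic extension $Eu$ of the boundary datum from part~3 of that theorem (equivalently, from the Poisson kernel associated with $G$, which makes sense for merely continuous $u$), and to put $V=V_{0}+Eu$, so that $V\left\vert _{bZ}\right.=u\circ\pi$ while the interior singularities are unchanged. In the compact case there is no Green function, but admissibility gives $\sum_{a}\sum_{j}c_{a,j}=\sum_{a}0=0$; fixing a base point $q_{0}\in Z$ and writing $g_{p,q_{0}}$ for a bipolar Green function (a third kind differential with opposite logarithmic poles at $p$ and $q_{0}$), the combination $V_{0}=\sum_{a,j}4\pi\,c_{a,j}\,g_{p_{a,j},q_{0}}$ has the prescribed poles and residues and no spurious pole at $q_{0}$, precisely because the coefficients sum to zero. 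The existence of such $g_{p,q_{0}}$ is classical and already underlies the Riemann--Klein theorem of the introduction.

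Finally, $\partial V$ is meromorphic on $Z$ with only simple poles, at the $p_{a,j}$, of residue $c_{a,j}$; since $\sum_{j}c_{a,j}=0$ over the branches meeting each node, the converse part of proposition~\ref{SingLog} shows that $V$ descends through $\pi$ to a genuine harmonic distribution $\widetilde{u}^{c}$ on $X$ with the required logarithmic behaviour and boundary values, and that the two descriptions in the statement coincide. Uniqueness follows by subtraction: if $V_{1},V_{2}$ are two solutions, then $W=V_{1}-V_{2}$ has no logarithmic singularity at any $p_{a,j}$ (the equal residues cancel), so, being bounded and harmonic off a finite set, it extends harmonically across $\pi^{-1}\left(\operatorname*{Sing}X\right)$; in the non compact case $W\left\vert _{bZ}\right.=0$ forces $W\equiv0$ by the maximum principle, and in the compact case $W$ is harmonic on the whole compact surface $Z$, hence constant. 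I expect the only genuinely delicate point to be the compact existence step: one must confirm that admissibility (summed over all nodes) is not merely necessary but sufficient for a globally defined third kind differential, and must check that the descent of $V$ is independent of the auxiliary choices---the coordinates $z_{j}$ and the normalization $\pi$---up to the additive constant the statement allows.
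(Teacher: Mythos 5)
Your proposal is correct and follows essentially the same route as the paper: lift to the normalization, superpose the principal Green function (multipolar/bipolar in the compact case, where admissibility kills the spurious pole) with the harmonic extension of the boundary datum, descend using the converse of proposition~\ref{SingLog}, and prove uniqueness by pulling the difference back to the normalization and applying the maximum principle (resp. constancy on a compact surface). The only difference is cosmetic: you track the normalizing factor $4\pi$ explicitly, which the paper leaves implicit.
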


\begin{proof}
[Proof]Assume $X$ is non compact. Let then $\widehat{g}$ be a Green function
for a normalization $\widehat{X}\overset{\pi}{\longrightarrow}X$ of $X$ such
that $\widehat{g}_{\zeta}\overset{def}{=}\widehat{g}\left(  \zeta,.\right)
=0$ on $b\widehat{X}$ for any $\zeta\in\widehat{X}$. As $\overline{X}$ is a
smooth manifold with boundary near $bX$, $b\widehat{X}\overset{\pi
}{\longrightarrow}bX$ is a diffeomorphism and $v=\pi^{\ast}u$ is a well
defined continuous function on $b\widehat{X}$. Let $V$ the distribution
defined on $\widehat{X}$ by%
\[
V=\widetilde{v}+%
{\displaystyle\sum\limits_{a\in\operatorname*{Sing}X}}
\,%
{\displaystyle\sum\limits_{1\leqslant j\leqslant\nu\left(  a\right)  }}
c_{a,j}\widehat{g}_{a_{j}}%
\]
where $\widetilde{v}$ is the harmonic extension of $v$ to $\widehat{X}$ and
where for each $a\in\operatorname*{Sing}X$, $\left\{  a_{1},...,a_{\nu\left(
a\right)  }\right\}  =\pi^{-1}\left(  a\right)  $ and $X_{a,j}=\pi\left(
W_{j}\right)  $, $W_{j}$ being some neighborhood of $a_{j}$ in $\widehat{X}$.
Then $\widetilde{u}^{c}=\pi_{\ast}V$ is a distribution on $X$ which is a usual
harmonic function on $\operatorname*{Reg}\overline{X}$ that extends $u$. The
same kind of computing as in proposition~\ref{SingLog} shows that
$\widetilde{u}^{c}$ is a harmonic distribution on $X$. Since $\overline{X}$
has smooth boundary, $\widetilde{u}^{c}$ has the same regularity as $u$ in a
neighborhood of $bX$ in $\overline{X}$.

To prove uniqueness, we have to show that if $U$ is a harmonic distribution
$X$ which vanish on $bX$ and has usual harmonic extension to any branch of
$X$, then $U=0$. Let us consider such an $U$. Then $V=\pi^{\ast}U$ is a well
defined function~; if $a_{j}\in\pi^{-1}\left(  a\right)  $ is in the closure
of $\pi^{-1}\left(  X_{a,j}^{\ast}\right)  $, then $V\left(  a_{j}\right)  $
is the value at $a$ of the harmonic extension of $U\left\vert _{X_{a,j}^{\ast
}}\right.  $. $V$ is of course harmonic on $\widehat{X}$, continuous up to the
boundary and vanish on it. So $V=0$. Hence $U=0$.

When $X$ is compact the classical construction techniques of bipolar Green
functions can be adapted to get on a normalization of $X$ multipolar Green
functions which, thanks to the properties of admissible families, can be seen
as harmonic distributions on $X$.
\end{proof}

The proposition~\ref{Dir} shows in particular that any continuous function on
the boundary of $X$ has many (weakly) harmonic distribution extensions to $X$
when no datum is specified at nodes. This non uniqueness phenomenon also
occurs for principal Green functions on nodal surfaces.

\section{Inverse problems for compact and nodal curves}

The inverse Dirichlet to Neumann problem (IDN problem for short) for a given
(smooth) Riemann surface $X$ with smooth boundary $\gamma$ is to reconstruct
it from the data of $\gamma$, $T_{\gamma}X$ and its Dirichlet to Neumann
operator which is the operator associating to a smooth function on $\gamma$
the restriction on $\gamma$ of the normal derivative of its harmonic extension
to $X$. This subject has been started by Belishev and
Kurylev~\cite{BeM-KuY1992} in a non stationary setting. For the stationary
case, uniqueness results based on the full knowledge of the DN-operator are
obtained in \cite{LaM-UhG2001} and~\cite{BeM2003}. The constructive
reconstruction method given in \cite{HeG-MiV2007} is here extended to Riemann
surfaces, compact or nodal.

\subsection{DN-data~; hypothesis A and B}

Let $X$ be an open bordered nodal curve. Since $\overline{X}$ has smooth
boundary, we can select two vector fields along $bX$, $\tau$ and $\nu$, such
that $\tau$ is a smooth generating section of the tangent bundle $T(bX) $ of
$bX$ and for each $x$ in $bX$, $\left(  \nu_{x},\tau_{x}\right)  $ is a
positively oriented orthonormal basis of $T_{x}\overline{X}$. Then, the
\textit{Dirichlet-Neumann operator} for $X$ and some admissible family
$c=\left(  c_{a,j}\right)  _{a\in\operatorname*{Sing}X,~1\leqslant
j\leqslant\nu\left(  a\right)  }$ is the the operator $N_{X,c}$ defined for
any $u\in C^{1}\left(  bX\right)  $ by
\[
N_{X,c}u=\left.  \frac{\partial\widetilde{u}^{c}}{\partial\nu}\right\vert
_{bX}%
\]
where $\widetilde{u}^{c}$ is the extension of $u$ to $X$ as a harmonic
distribution such that $\partial\widetilde{u}^{c}$ has residue $c_{a,j}$ at
$a$ when $a\in\operatorname*{Sing}X$ and $1\leqslant j\leqslant\nu\left(
a\right)  $.

Since an admissible family don't reflect the complex structure of $X$ but only
tracks nodes' existence, a natural inverse Dirichlet to Neumann problem is to
look for a process rebuilding $X$ from the data of $bX$ and the action of some
$N_{X,c}$ on some $u\in C^{1}\left(  bX\right)  $, where $c$ belongs to an
unknown set of admissible families.

Whether or not admissible families can be recovered from boundary data is a
very natural question if one considers the physical origin of the problem as
explained in the introduction~; they corresponds to the charges set up on the
nodes.\medskip

With these inverse reconstruction problems arise the question of uniqueness of
an open bordered nodal curve having a given boundary data. So let $\gamma$ be
a smooth compact oriented real curve without component reduced to a point. Let
$\tau$ be a smooth generating section of $T\gamma$ and $\nu$ an another vector
field along $\gamma$ such that the bundle $\mathcal{T}$ generated by $\left(
\nu_{x},\tau_{x}\right)  _{x\in\gamma}$, has rank $2$~; $\gamma$ is assumed to
be oriented by $\tau$ and $\mathcal{T}$ by $\left(  \nu,\tau\right)  $.
Consider an operator $N$ from $C^{1}\left(  \gamma\right)  $ to the space of
currents on $\gamma$ of degree $0$ and order $1$ (i.e. functionals on $C^{1}$
1-forms on $\gamma$). As in \cite{HeG-MiV2007}, we use a setting which
emphasizes the involved complex analysis. With $N$ come two other operators
$L$ and $\theta$ defined for $u\in C^{1}\left(  \gamma\right)  $ by%
\begin{equation}
Lu=\frac{1}{2}\left(  Nu-i\,Tu\right)  \hspace{0.5cm}\&\hspace{0.5cm}\theta
u=\left(  Lu\right)  \left(  \nu^{\ast}+i\tau^{\ast}\right)
\label{F/ N vers CR et theta}%
\end{equation}
where $T$ is the tangential derivation by $\tau$ and $(\nu_{x}^{\ast},\tau
_{x}^{\ast})$ is the dual basis of $\left(  \nu_{x},\tau_{x}\right)  $ for
every $x\in\gamma$.

Note that if actually $\gamma$ is the smooth boundary of bordered nodal curve
$X$ such that $\left(  \nu_{x},\tau_{x}\right)  $ is a positively oriented
orthonormal basis of $T_{x}\overline{X}$, the equality $Nu=N_{X,c}u$ is
equivalent to the identity $\left(  \partial\widetilde{u}^{c}\right)
\left\vert _{\gamma s}\right.  =\theta u$.

In the smooth case, we know from \cite{HeG-MiV2007} that the knowledge of
$\gamma$ and the action of $N$ on only three generic (in the sense detailed
hereafter) continuous functions is sufficient to reconstruct such a Riemann
surface when it exists. Since an admissible family do not encode the complex
structure of $X$, it is natural to let the given boundary data corresponds to
different admissible families and hence to different operators $N$. So, we are
lead to consider the following~:\smallskip

\noindent\textbf{A. }We consider three operators $N_{0}$, $N_{1}$, and $N_{2}$
from $C^{1}\left(  \gamma\right)  $ to the space of currents on $\gamma$ of
degree $0$ and order $1$, their corresponding operators $\theta_{\ell}=\left(
\nu^{\ast}+i\tau^{\ast}\right)  L_{\ell}$ where $L_{\ell}=\frac{1}{2}\left(
N_{\ell}-i\,T\right)  $ and $u_{0},u_{1},u_{2}\in C^{\infty}\left(
\gamma\right)  $ three real valued functions only ruled by the hypothesis that%
\begin{equation}
f=\left(  f_{1},f_{2}\right)  =\left(  \,\left(  L_{\ell}u_{\ell}\right)
/\left(  L_{0}u_{0}\right)  \,\right)  _{\ell=1,2}=\left(  \,\left(
\theta_{\ell}u_{\ell}\right)  /(\theta_{0}u_{0})\,\right)  _{\ell=1,2}
\label{F/ f}%
\end{equation}
is an embedding of $\mathbb{\gamma}$ in $\mathbb{C}^{2}$ considered as the
complement of $\left\{  w_{0}=0\right\}  $ in the complex projective plane
$\mathbb{CP}_{2}$ with homogeneous coordinates $\left(  w_{0}:w_{1}%
:w_{2}\right)  $. This is somehow generic since the
proposition~\ref{P/ generique B} below shows in particular that if it happens
that $\gamma$ is the smooth boundary of a complex curve and $N$ are Dirichlet
to Neumann operators, then the set of $\left(  u_{\ell}\right)  _{0\leqslant
\ell\leqslant2}\in C^{\infty}\left(  \gamma\right)  ^{3}$ such that the above
map $f$ is an embedding is a dense open set of $C^{\infty}\left(
\gamma\right)  ^{3}$.\smallskip

Before defining what are restricted DN-data, we have to precise how a 3-uple
$\omega=\left(  \omega_{0},\omega_{1},\omega_{2}\right)  $ of smooth $\left(
1,0\right)  $-forms which never vanish simultaneously induces a map from $X$
to $\mathbb{CP}_{2}$~; such 3-uples exist since a normalization of $X$ do have
ones. We define a map from $X$ to $\mathbb{CP}_{2}$, denoted $\left[
\omega\right]  $ or $\left(  \omega_{0}:\omega_{1}:\omega_{2}\right)  $, by
defining it with the formulas $\left[  \omega\right]  =\left(  1:\frac
{\omega_{1}}{\omega_{0}}:\frac{\omega_{2}}{\omega_{0}}\right)  $ on $\left\{
\omega_{0}\neq0\right\}  $, $\left[  \omega\right]  =\left(  \frac{\omega_{0}%
}{\omega_{1}}:1:\frac{\omega_{2}}{\omega_{1}}\right)  $ on $\left\{
\omega_{1}\neq0\right\}  $ and $\left[  \omega\right]  =\left(  \frac
{\omega_{0}}{\omega_{2}}:\frac{\omega_{1}}{\omega_{2}}:1\right)  $ on
$\left\{  \omega_{2}\neq0\right\}  $~; here each quotient written is a well
defined (along any branch) meromorphic function since $\dim X=1$.
In~\cite{HeG-MiV2007}, we have identified with a slight abuse of language
$\left[  \omega\right]  $ with $\left[  \omega\right]  \left\vert _{\left\{
\omega_{0}\neq0\right\}  }\right.  $ which have the affine coordinates
$\left(  \frac{\omega_{1}}{\omega_{0}},\frac{\omega_{2}}{\omega_{0}}\right)  $
in the affine hyperplane $\left\{  w_{0}\neq0\right\}  $.\smallskip

Assume now that $u=\left(  u_{\ell}\right)  _{0\leqslant\ell\leqslant2}\in
C^{\infty}\left(  \gamma\right)  ^{3}$ and set $\theta u=\left(  \theta_{\ell
}u_{\ell}\right)  _{0\leqslant\ell\leqslant2}$. We call $\left(
\gamma,u,\theta u\right)  $ a \textit{restricted DN-datum} for an open
bordered nodal curve $X$ if $\left(  \gamma,u,\theta u\right)  $ satisfies (A)
and the following~:\smallskip

\noindent\textbf{B1. }$X$ has smooth boundary $\gamma$.

\noindent\textbf{B2. }For each $\ell\in\left\{  0,1,2\right\}  $,
$\theta_{\ell}u_{\ell}=\left(  \partial\widetilde{u_{\ell}}^{c_{\ell}}\right)
{}\left\vert _{\gamma}\right.  $ for some admissible family $c_{\ell}$.

\noindent\textbf{B3. }The $\partial\widetilde{u_{\ell}}^{c_{\ell}}$ have no
common zero and the map
\[
F=\left(  \partial\widetilde{u_{0}}^{c_{0}}:\partial\widetilde{u_{1}}^{c_{1}%
}:\partial\widetilde{u_{2}}^{c_{2}}\right)
\]
extends to $X$ the map $\left(  1:f_{1}:f_{2}\right)  $ defined by
(\ref{F/ f}) in the sense that for every $x_{0}\in\gamma$, $\underset
{x\rightarrow x_{0},~x\in X}{\lim}\left(  \frac{\partial\widetilde{u_{1}%
}^{c_{1}}}{\partial\widetilde{u_{0}}^{c_{0}}}\left(  x\right)  ,\frac
{\partial\widetilde{u_{2}}^{c_{2}}}{\partial\widetilde{u_{0}}^{c_{0}}}\left(
x\right)  \right)  $ exists and equals $\left(  f_{1}\left(  x_{0}\right)
,f_{2}\left(  x_{0}\right)  \right)  $~; this last property holds
automatically if $\gamma$ and $f$ are real analytic.

\noindent\textbf{B4. }There is a finite subset\ $A$ in $X$ such that $F$ is an
embedding from $Z\backslash A$ into $\mathbb{CP}_{2}$.\smallskip

If one wish to emphasize the admissible families $c_{\ell}$, we say that
$\left(  \gamma,u,\theta u\right)  $ and the $c_{\ell}$ are associated. When
each node $a_{n}$, $1\leqslant n\leqslant N$, of $X$ is obtained by
identification in a Riemann surface $Z$ of the points in a family $\left(
a_{n,j}\right)  _{1\leqslant j\leqslant\nu_{n}}$ and when the family of
charges or residues corresponding to $a_{n}$ and $u_{\ell}$ is $\left(
c_{\ell,n,j}\right)  _{1\leqslant j\leqslant\nu_{n}}$ ($0\leqslant
\ell\leqslant2$) we also say that $\left(  \gamma,u,\theta u\right)  $ is a
restricted DN-datum for $Z$ and the $a_{n,j}$ charged by $c_{\ell,n,j}$, or
$c_{n,j}$ if no $c_{\ell,n,j}$ depends on $\ell$.

The condition (B4) may seem restrictive but is open and dense in the following sense~:

\begin{proposition}
\label{P/ generique B}Assume $\gamma$ is the boundary \ of an open bordered
nodal Riemann surface $X$, $c=\left(  c_{\ell}\right)  _{0\leqslant
\ell\leqslant2}$ is a 3-uple of admissible families and $u=\left(  u_{\ell
}\right)  _{0\leqslant\ell\leqslant2}$ satisfies hypothesis A. Then the set
$E_{X,c}$ of $u$ in $C^{\infty}\left(  \gamma\right)  ^{3}$ such that $\left(
\gamma,u,\theta u\right)  $ is a restricted DN-datum for $X$ and $c$ is a
dense open set of $C^{\infty}\left(  \gamma\right)  ^{3}$.
\end{proposition}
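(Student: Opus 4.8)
The plan is to regard $E_{X,c}$ as the set of $u\in C^\infty(\gamma)^3$ for which the single projective map
\[
F=(\partial\widetilde{u_0}^{c_0}:\partial\widetilde{u_1}^{c_1}:\partial\widetilde{u_2}^{c_2})
\]
enjoys three properties: its defining forms $\omega_\ell:=\partial\widetilde{u_\ell}^{c_\ell}$ have no common zero on $X$ (first half of B3), $F|_\gamma$ is an embedding of $\gamma$ (hypothesis A), and $F$ is an embedding of $X$ off a finite set (B4). Because $\gamma=bX$ and the operators at play are the genuine $N_{X,c_\ell}$, conditions B1 and B2 hold by construction and, by the identity $(\partial\widetilde{u_\ell}^{c_\ell})|_\gamma=\theta_\ell u_\ell$ recorded after (\ref{F/ N vers CR et theta}), the map $f$ of (\ref{F/ f}) coincides with the boundary restriction of $F$, so the matching clause of B3 is automatic as well. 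I would first record, via proposition~\ref{Dir} and the Green-function construction recalled above, that $u\mapsto\widetilde{u_\ell}^{c_\ell}$ is affine and continuous into harmonic distributions, hence $u\mapsto\omega_\ell$ is continuous into meromorphic $(1,0)$-forms whose polar part is the fixed one prescribed by $c_\ell$. The essential structural remark is that replacing $u_\ell$ by $u_\ell+v_\ell$ changes $\omega_\ell$ by the \emph{holomorphic} form $\partial\widetilde{v_\ell}^{0}$ (zero residues), so the admissible perturbations of $F$ are governed by the real-linear space $\mathcal H$ of such forms, while the values of $F$ at the nodes, read off from the residues through proposition~\ref{SingLog}, stay rigid.

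For openness I would pass to a normalization $\widehat X\overset{\pi}{\longrightarrow}X$, a smooth bordered Riemann surface, and work with the lift $\widehat F=F\circ\pi$, whose components are the meromorphic forms $\pi^*\omega_\ell$ with fixed poles over $\pi^{-1}(\operatorname*{Sing}X)$. Putting all nodes into the exceptional set $A$, condition B4 becomes equivalent to $\widehat F$ being an immersion off a finite set and injective off a finite set, i.e.\ to $\widehat F$ having at worst finitely many ramification points and transverse double points on the compact $\overline{\widehat X}$. Each defining condition is then manifestly open in the $C^\infty$ topology: by continuity of $u\mapsto\widehat F$ (uniform on compacta of $\overline{\widehat X}$), the no-common-zero locus is open on the compact $\overline{\widehat X}$; the embedding condition for the compact curve $\gamma$ is open; and an immersion with only transverse double points is stable, the compactness of the source forbidding the birth of new, distant double points under a $C^1$-small perturbation. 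Intersecting three open sets gives that $E_{X,c}$ is open.

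For density, given an arbitrary $u^0\in C^\infty(\gamma)^3$, I would produce nearby $u\in E_{X,c}$ by a parametric transversality argument on $\widehat X$, taking the perturbation parameter in $\mathcal H^3$. The three facts to be arranged are (i) no common zero, expected since three independent sections impose codimension three on the one-dimensional $\widehat X$; (ii) $\widehat F$ an immersion off a finite set and generically injective, which is the classical statement that a curve in $\mathbb{CP}_N$ admits a generic projection to $\mathbb{CP}_2$ that is birational onto an image with only nodes; and (iii) $\widehat F|_\gamma$ an embedding, which for the $1$-manifold $\gamma$ mapping to $\mathbb{C}^2\subset\mathbb{CP}_2$ is generic by the elementary Whitney count. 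Each is obtained by checking that the relevant $1$- and $2$-jet evaluation maps, restricted to the family $F_u$, are submersive, and then invoking the Abraham--Thom parametric transversality theorem together with the density of smooth data carrying the prescribed generic jets.

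The main obstacle is exactly the submersivity just invoked, because the perturbations are constrained to $\mathcal H$: we may add to each $\omega_\ell$ only a holomorphic form $\partial\widetilde{v_\ell}^{0}$ coming from a real boundary function and with zero residues, so the polar behaviour --- and hence $F$ at the nodes --- cannot be moved. The crux is therefore a separation lemma asserting that $\mathcal H$ separates points and $1$-jets of $\widehat X$ away from the fixed polar set, i.e.\ that for finitely many prescribed interior points one can realise arbitrary values and first-order data by forms $\partial\widetilde{v}^{0}$. This is where genuine complex analysis enters: one must show that harmonic extensions of real boundary data, once differentiated, fill out enough of the space of weakly holomorphic forms modulo the finitely many real period constraints $\operatorname{Re}\oint\omega=0$ that such forms necessarily satisfy. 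I would establish it from the explicit Cauchy--Green kernels of \cite{HeG-MiV2012} together with a Runge-type approximation on the bordered surface $\widehat X$; controlling these kernels and the period constraints simultaneously is the delicate point, which is why --- as announced in the text --- the full argument is deferred to a separate paper.
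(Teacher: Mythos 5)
There is nothing in this paper to compare your attempt against: the authors explicitly state that Proposition~\ref{P/ generique B} (together with Proposition~\ref{P/ compact generique}) ``will be proved in a separate paper because they involve methods and results of complex analysis which deserve attention of their own.'' So the paper contains no proof of this statement, and your proposal has to be judged on its own.

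As a proof it has a genuine gap, and you have correctly located it yourself. The reductions you perform are sound but essentially bookkeeping: B1, B2 and the matching clause of B3 are automatic for boundary data produced by the true operators $N_{X,c_\ell}$, openness of the no-common-zero condition, of the embedding condition on the compact curve $\gamma$, and (with a little more care than ``stability of transverse double points'' --- B4 only asks for injectivity off a finite set, not nodality of the image) of B4 follows from continuity of $u\mapsto F$ on $\overline{\widehat X}$. The entire analytic content of the proposition is the density statement, and that rests on the claim you call the ``separation lemma'': that the real-linear space $\mathcal H$ of forms $\partial\widetilde{v}^{\,0}$, $v\in C^\infty(\gamma,\mathbb R)$ --- i.e.\ holomorphic $(1,0)$-forms smooth up to $b\widehat X$ whose periods have vanishing real parts --- separates points and $1$-jets at arbitrary finite subsets of $\widehat X$ away from the polar set, so that the parametric transversality machinery applies to the constrained family $F_u$. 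You assert this and defer it, via Runge approximation and the Cauchy--Green kernels of \cite{HeG-MiV2012}, to exactly the separate paper the authors invoke. Without that lemma the transversality argument does not get off the ground, because the perturbations available to you are precisely the constrained ones: you cannot move the polar parts at all, and you have not shown that what remains is rich enough to achieve the required genericity (in particular to break a possible global symmetry forcing $F$ to factor through a nontrivial covering). So what you have written is a correct and well-organized reduction of the proposition to its hard kernel, not a proof of it --- which, to be fair, is the same position the paper itself is in.
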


As stated in the introduction, this result as well as
proposition~\ref{P/ compact generique} will be proved in a separate paper
because they involve methods and results of complex analysis which deserve
attention of their own.\smallskip

When $X$ is smooth, there is no node and the only 3-uple of admissible
families is the empty one. Dropping in the above definition any reference to
admissible families gives a restricted DN-datum notion in the smooth case.
Meanwhile, because of (B4), restricted DN-data thus defined here are more
specific than those considered in \cite{HeG-MiV2007}. Actually, in the
exceptional case $\left(  \gamma,u,\theta u\right)  $ satisfies only (B1) to
(B3), it is possible that the direct image by $F$ of the integration current
on $X$ is not always, contrary to~\cite[lemma~7]{HeG-MiV2007} proof's claim,
an integration current over a subvariety of $\mathbb{CP}_{2}\backslash
f\left(  \gamma\right)  $. The reason is that in the general case, $F_{\ast
}\left[  X\right]  $ could even not be a locally flat current as defined in
\cite{FeH1969Li}. However, all statements of \cite{HeG-MiV2007} are true with
the above reinforced definition of restricted DN-datum.\smallskip

It follows from the definitions that if $X$ is an open bordered nodal curve
obtained after identification of some points in an open bordered Riemann
surface $Z$ and $\pi:Z\longrightarrow X$ is the natural projection, the direct
image by $\pi$ of any harmonic function on $Z$ continuous up to $\overline{Z}$
is a harmonic distribution on $X$ solving a Dirichlet problem with a zero
admissible family. Hence, the data of its differential along $bX$ fail to
encode any information about the nodal curve but its normalization. This
motivates the following definition. We say that a finite family $\left(
w_{s}\right)  _{s\in\Sigma}$ of complex numbers is \textit{generic for a
partition} $\left\{  \Sigma_{1},..,\Sigma_{N}\right\}  $ of $\Sigma$ if the
following holds~:

\begin{itemize}
\item $%
{\displaystyle\sum\limits_{s\in\Sigma_{j}}}
w_{s}=0$ for any $j$.

\item For any family of sets $\left(  T_{j}\right)  _{1\leqslant j\leqslant
N}$ such that $T_{j}\subsetneq\Sigma_{j}$ for all $j$ and $\underset
{1\leqslant j\leqslant N}{\cup}T_{j}\neq\varnothing$, $%
{\displaystyle\sum\limits_{1\leqslant j\leqslant N}}
$ $%
{\displaystyle\sum\limits_{t\in T_{j}}}
w_{t}\neq0$.
\end{itemize}

\noindent An admissible family $c=\left(  c_{a,j}\right)  _{\left(
a,j\right)  \in\Sigma}$ ($\Sigma=\underset{a\in\operatorname*{Sing}X}{\cup
}\Sigma_{a}$, $\Sigma_{a}=\left\{  a\right\}  \times\left\{  1,...,\nu\left(
a\right)  \right\}  $) of an open bordered nodal curve $X$ is said to be
\textit{generic} \textit{for} $X$ if it is generic for $\left\{  \Sigma
_{a};~a\in\operatorname*{Sing}X\right\}  $, that is, when the only way to
achieve $%
{\displaystyle\sum\limits_{a\in\operatorname*{Sing}X}}
{\displaystyle\sum\limits_{j\in J_{a}}}
c_{a,j}=0$ with $J_{a}\subset\left\{  1,...,\nu\left(  a\right)  \right\}  $
for all $a\in\operatorname*{Sing}X$ is either to have $J_{a}=\left\{
1,...,\nu\left(  a\right)  \right\}  $ or $J_{a}=\varnothing$ for all $a$.

\subsection{Proofs of results for the compact case}

\subsubsection{$\frac{{}}{{}}$Proof of theorem~\ref{T/ compactR3opt}}

We assume with no loss of generality that $S$ is smoothly bordered and
$\gamma=bS$ is then equipped with the orientation induced by $Z\backslash S$.
Set $u=\left(  U_{Z,\ell}^{a,c}\left\vert _{\gamma}\right.  \right)
_{0\leqslant\ell\leqslant2}$ and $\theta u=\left(  \partial_{Z}U_{Z,\ell
}^{a,c}\left\vert _{\gamma}\right.  \right)  _{0\leqslant\ell\leqslant2}$
(resp. $u^{\prime}=\left(  U_{Z^{\prime},\ell}^{a,c}\left\vert _{\gamma
}\right.  \right)  _{0\leqslant\ell\leqslant2}$ and $\theta^{\prime}u^{\prime
}=\left(  \partial_{Z^{\prime}}U_{Z^{\prime},\ell}^{a,c}\left\vert _{\gamma
}\right.  \right)  _{0\leqslant\ell\leqslant2}$) where $\overline{\partial
}_{Z}$ and $\overline{\partial}_{Z^{\prime}}$ are the Cauchy-Riemann operators
of $Z$ and $Z^{\prime}$. By hypothesis, the complex structures on $S$ induced
by $Z$ and $Z^{\prime}$ are the same, namely the complex structure induced by
the standard metric of $\mathbb{R}^{3}$ on $S$. Hence, $\left(  \gamma
,u,\theta u\right)  =\left(  \gamma,u^{\prime},\theta^{\prime}u^{\prime
}\right)  $ follows from $\left(  U_{Z,\ell}^{a,c}\left\vert _{S}\right.
\right)  _{0\leqslant\ell\leqslant2}=\left(  U_{Z^{\prime},\ell}%
^{a,c}\left\vert _{S}\right.  \right)  _{0\leqslant\ell\leqslant2}$. As
$\left(  a,c\right)  $ is assumed to be in $E_{Z}\cap E_{Z^{\prime}}$, it
appears that $\left(  \gamma,u,\theta u\right)  $ is a restricted datum for
both $Z\backslash S$ and $Z^{\prime}\backslash S$. Theorem~1 of
\cite{HeG-MiV2007} with the above definition of DN-datum or
theorem~\ref{T/ unicite} below applies and gives us the existence of an
isomorphism $\varphi_{1}:Z\backslash S\longrightarrow Z^{\prime}\backslash S$
which is the identity on $bS$. Thanks to the Morera theorem, the gluing of
$\varphi_{1}$ with $Id_{S}$ gives the desired isomorphism $\varphi
:Z\longrightarrow Z^{\prime}$.

The reconstruction part follows directly from \cite[ th.~2 ]{HeG-MiV2007}
applied to $Z\backslash S$. The reconstruction formulas are the same that
those of theorem~\ref{T/ looseIDN} which is an adaptation to the nodal case of
\cite[ th.~2 ]{HeG-MiV2007}.\smallskip

\noindent\textbf{Remark. }If $Z$ and $Z^{\prime}$ are actually compact nodal
curves and if the potentials are associated to generic admissible families,
the same proof readily applies thanks to theorems~\ref{T/ unicite}
and~\ref{T/ looseIDN}.

\subsubsection{Proof of theorem~\ref{T/ compactR3}}

Under the hypothesis of theorem~\ref{T/ compactR3}, the proof\ of
theorem~\ref{T/ compactR3opt} readily apply.

\subsection{Uniqueness and reconstruction results for nodal curves}

The theorem~\ref{T/ unicite} below shows what kind of uniqueness can be
expected in the inverse Dirichlet to Neumann problem.

\begin{theorem}
[Uniqueness for IDN problems]\label{T/ unicite}Assume that $X$ and $X^{\prime
}$ are bordered nodal curves with a restricted DN-datum $\left(
\gamma,u,\theta u\right)  $ associated to admissible families $c_{0}%
,c_{1},c_{2}$ for $X$ and to admissible families $c_{0}^{\prime},c_{1}%
^{\prime},c_{2}^{\prime}$ for $X^{\prime}$. Then, the following holds.

1. $X$ and $X^{\prime}$ are obtained by the identification of some finite sets
of points in a same open bordered Riemann surface.

\textbf{2. }If at least one of the admissible families associated to $\left(
\gamma,u,\theta u\right)  $ has no zero coefficient, $X\cup\gamma$ and
$X^{\prime}\cup\gamma$ are roughly isomorphic through a map which is the
identity on $\gamma$.

\textbf{3. }If at least one of the $c_{\ell}$ and one of the $c_{\ell}%
^{\prime}$ is generic for $X$ and $X^{\prime}$ respectively, then there is an
isomorphism of bordered nodal curves between $X\cup\gamma$ and $X^{\prime}%
\cup\gamma$ whose restriction on $\gamma$ is the identity.
\end{theorem}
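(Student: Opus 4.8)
The plan is to recover a single ambient projective curve from the common boundary data, then to read off the nodal structure from the poles and residues of the meromorphic forms that the two curves are forced to share.

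First I would exploit that the datum $(\gamma,u,\theta u)$ forces $F$ and $F'$ to have the \emph{same} boundary map $f=(1:f_{1}:f_{2})$ on $\gamma$. By (B3) and (B4) the direct images $F_{\ast}[X]$ and $F'_{\ast}[X']$ are holomorphic $1$-chains in $\mathbb{CP}_{2}\backslash f(\gamma)$, and both have boundary $f_{\ast}[\gamma]$ because $F|_{\gamma}=F'|_{\gamma}=f$. By the uniqueness in the boundary problem for holomorphic chains used in \cite{HeG-MiV2007}, $F_{\ast}[X]=F'_{\ast}[X']$; call $V$ the common support. Normalizing $V$ produces an open bordered Riemann surface $Z$, and $F,F'$ induce isomorphisms of $\operatorname{Reg}X$ and $\operatorname{Reg}X'$ onto $\operatorname{Reg}V$ off the finite sets where they fail to be injective; composing and extending across these finitely many points gives $\widehat{X}\cong Z\cong\widehat{X'}$, with the identification equal to the identity on $\gamma$. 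Thus both $X$ and $X'$ are quotients of the same $Z$ by identification of finite sets, which is part~1.

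Next I would upgrade equality of boundary ratios to equality of the forms. Since $\theta_{\ell}u_{\ell}=(\partial\widetilde{u_{\ell}}^{c_{\ell}})|_{\gamma}=(\partial\widetilde{u_{\ell}}^{c'_{\ell}})|_{\gamma}$, the pullbacks to $Z$ of $\partial\widetilde{u_{\ell}}^{c_{\ell}}$ and of $\partial\widetilde{u_{\ell}}^{c'_{\ell}}$ are two weakly holomorphic $(1,0)$-forms with the same boundary value on $bZ=\gamma$; their difference extends holomorphically across $\gamma$ to the double of $Z$ and vanishes there, so they agree as a single form $\omega_{\ell}$ on $Z$. By proposition~\ref{SingLog} the poles of $\omega_{\ell}$ are simple and sit at preimages of nodes, and the residue of $\omega_{\ell}$ at such a point is simultaneously the relevant coefficient of $c_{\ell}$ (seen from $X$) and of $c'_{\ell}$ (seen from $X'$). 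Hence $c_{\ell}$ and $c'_{\ell}$ coincide as residue data on $Z$, and the pole set $P_{\ell}=\operatorname{poles}(\omega_{\ell})$ is intrinsic and contained in $\pi^{-1}(\operatorname{Sing}X)\cap\pi'^{-1}(\operatorname{Sing}X')$. For part~2, if one of the families, say $c_{\ell_{0}}$, has no zero coefficient, then every preimage of a node of the corresponding curve is an honest pole of $\omega_{\ell_{0}}$, so $P_{\ell_{0}}$ is the entire node-preimage set of that curve; the shared residue data then matches it with the node-preimage set of the other curve, and since a node carrying only zero residues for every $\ell$ would be a trivial identification that this hypothesis excludes, the identity on $Z$ carries $\pi^{-1}(\operatorname{Sing}X)$ onto $\pi'^{-1}(\operatorname{Sing}X')$. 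This is precisely a rough isomorphism equal to $\mathrm{Id}$ on $\gamma$.

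Finally, for part~3 I would recover the grouping of the preimages into nodes. Genericity of a family forces each coefficient to be nonzero (otherwise a singleton would give a nontrivial vanishing sub-sum), so the part~2 conclusion applies and $\pi^{-1}(\operatorname{Sing}X)=\pi'^{-1}(\operatorname{Sing}X')=:P$. The node structures of $X$ and $X'$ give partitions $\Pi,\Pi'$ of $P$ whose blocks have vanishing residue sum for every $\ell$. If $c_{\ell_{1}}$ is generic for $X$, then $\Pi$ is the finest partition of $P$ whose blocks have zero $c_{\ell_{1}}$-sum; likewise $\Pi'$ is the finest for the equal generic family on the $X'$ side. Because $\Pi$ also makes the $X'$-generic family admissible and $\Pi'$ makes the $X$-generic family admissible, a mutual-refinement argument yields $\Pi=\Pi'$, so the node-branch structures agree and $\widehat{F'}^{\,-1}\circ\widehat{F}$ descends to a genuine isomorphism $X\cup\gamma\to X'\cup\gamma$ restricting to the identity on $\gamma$. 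The main obstacle is the chain-uniqueness step of the first paragraph: one must know that (B4) makes $F_{\ast}[X]$ an actual holomorphic chain bounded by $f_{\ast}[\gamma]$, and not merely a locally flat current, which is exactly the reinforcement of the restricted DN-datum discussed after proposition~\ref{P/ generique B}; once that is secured, the Harvey--Lawson type uniqueness applies as in \cite{HeG-MiV2007}.
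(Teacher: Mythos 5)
Your proposal follows essentially the same route as the paper: meromorphic extension of $f$ together with the Harvey--Lawson type uniqueness for holomorphic chains (secured by the reinforced condition (B4)) to produce a common normalization $Z$ and an identification equal to the identity on $\gamma$; equality on $Z$ of the pulled-back forms $\partial\widetilde{u_{\ell}}^{c_{\ell}}$ and $\partial\widetilde{u_{\ell}}^{c_{\ell}^{\prime}}$ from their common boundary values, hence matching poles and residues for the rough isomorphism; and genericity to force the node groupings to coincide. Your mutual-refinement argument on the partitions is only a repackaging of the paper's continuity argument for the multivalued maps $\varphi$ and $\psi$, so the two proofs are essentially identical.
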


\noindent\textbf{Remarks. 1. }If $E\subset\gamma$ and $h^{1}\left(  E\cap
c\right)  >0$ for each connected component $c$ of $\gamma$, meromorphic
functions are uniquely determined by their values on $E$ and it follows that
the theorem~\ref{T/ unicite} conclusions hold when $N_{X^{\prime},c^{\prime}%
}u_{\ell}=N_{X,c}u_{\ell}$ is ensured only on $E$ and the meromorphic
functions $\left(  \partial\widetilde{u_{\ell}}^{c_{\ell}}\right)  /\left(
\partial\widetilde{u}_{0}^{c_{0}}\right)  $ and $(\partial\widetilde{u_{\ell}%
}^{c_{\ell}^{\prime}})/(\partial\widetilde{u}_{0}^{c_{0}^{\prime}})$ are
continuous near $\gamma$.

\textbf{2. }Assertion\textbf{\ }(2) shows that when two bordered nodal curves
$X$ and $X^{\prime}$ share a same \textit{restricted DN-datum} $\left(
\gamma,u,\theta u\right)  $, there is only a finite indeterminacy between $X$
and $X^{\prime}$.

\textbf{3. }If one of the $c_{\ell}$ is generic for $X$, then there is a
holomorphic surjective map from $X$ onto $X^{\prime}$, that is a continuous
map holomorphic along any branch of $X$ which sends a node of $X$ to a node of
$X^{\prime}$. Hence, $X^{\prime}$ may be considered as a quotient of $X$ and,
equivalently, $X$ as a partial normalization of $X^{\prime}$.

\begin{proof}
[Proof]Let $Z\overset{\pi}{\longrightarrow}X$ be a normalization of $X$ and
$g=f\circ\pi$ where $f$ is defined by (\ref{F/ f}). Then $\widehat{\gamma}%
=\pi^{-1}\left(  \gamma\right)  $ is a smooth oriented real curve and bounds
smoothly $\overline{Z}$. Thanks to hypothesis A, $\delta=f\left(
\gamma\right)  =g\left(  \widehat{\gamma}\right)  $ is a smooth compact
oriented real curve of $\mathbb{CP}_{2}$ without component reduced to a point.
It follows from (B2) that $G=F\circ\pi$ is a meromorphic extension of $g$ to
$Z$. To prove (1), we now follow the proof of \cite[th.~1]{HeG-MiV2007} for
which the key point is that the embedding $g$ into $\mathbb{CP}_{2}$ of the
real curve $\widehat{\gamma}$ extends meromorphically as $G$ to the complex
curve $Z$. The gap arising in conditions of \cite[lemma 7]{HeG-MiV2007} is
avoided thanks to the reinforced but still generic definition of restricted DN-datum.

Since $\left(  \gamma,u,\theta u\right)  $ satisfies (B4) and $X$ is nodal,
there is in $Z$ a finite set $A$ such that $G$ is an isomorphism from
$Z\backslash A$ to $G\left(  X\right)  \backslash G\left(  A\right)  $. Thus
$Y=G\left(  X\right)  \backslash\delta$ has to be a complex curve of
$\mathbb{C}\mathbb{P}_{2}\backslash\delta$ satisfying $d\left[  Y\right]
=\left[  \delta\right]  $. It contains no compact complex curve because $X$
has none and it has finite mass because of a theorem of Wirtinger (see
\cite{HaR1977}). Note that $A$ may meet $G^{-1}\left(  \delta\right)  $ but
that each point of $\overline{Z}$ has in $\overline{Z}$ a neighborhood $V$
such that $F:V\rightarrow F\left(  V\right)  $ is diffeomorphism between
manifolds with smooth boundary. Hence, the conclusions of \cite[lemma
7]{HeG-MiV2007} are valid.

Set $Z_{\circ}=Z\backslash G^{-1}\left(  \delta\right)  $,
$B=\operatorname*{Sing}\overline{Y}$ and $\widehat{A}=G{}^{-1}\left(
B\right)  $~; note that $S=\pi^{-1}\left(  \operatorname*{Sing}X\right)
\subset\widehat{A}$. The proof of lemma~9 of \cite{HeG-MiV2007} gives that
$G:Z_{\circ}\backslash\widehat{A}\rightarrow Y\backslash B=\operatorname*{Reg}%
Y\ $is an isomorphism of complex manifolds, $G:\overline{Z}\rightarrow
\overline{\widehat{Y}}$ is proper and that $G:\overline{Z}\backslash
A\rightarrow\overline{Y}\backslash B=\operatorname*{Reg}\overline{Y}\ $is an
isomorphism of manifolds with smooth boundaries. The same construction apply
for $X^{\prime}$. Denoting by a prime every preceding notation above to get
objects related to $X^{\prime}$, we can apply the end of the proof of
\cite[th.~1]{HeG-MiV2007} and get that $\overline{Z}$ and $\overline
{Z^{\prime}}$ are isomorphic through a map $\Phi:X\longrightarrow X^{\prime}$
which is the identity on $\gamma$. Thus, the first point of the theorem is proved.

The $\left(  1,0\right)  $-forms $U_{1}=\pi^{\ast}\partial\widetilde{u_{1}%
}^{c_{1}}$ and $U_{1}^{\prime}=\Phi^{\ast}\pi^{\prime\ast}\partial
\widetilde{u_{1}}^{c_{1}^{\prime}}$ are meromorphic on $Z$ and they are equal
on $\gamma=bZ$. Hence, they are equal on $Z$. In particular, they have same
poles and same residues. If $c_{1}$ or $c_{1}^{\prime}$ has no zero
coefficient, this yield $\Phi^{-1}\left(  S^{\prime}\right)  =S$. Thus, the
second point of the theorem is proved.

Set $\Sigma=\underset{a\in\operatorname*{Sing}X}{\cup}\left\{  a\right\}
\times\left\{  1,...,\nu\left(  a\right)  \right\}  $ and $\Sigma^{\prime
}=\underset{a^{\prime}\in\operatorname*{Sing}X}{\cup}\left\{  a^{\prime
}\right\}  \times\left\{  1,...,\nu\left(  a^{\prime}\right)  \right\}  $. For
each $a$ in $\operatorname*{Sing}X$ (resp. each $a^{\prime}$ in
$\operatorname*{Sing}X^{\prime}$), we fix a numeration $\left(  X_{a,j}%
\right)  _{1\leqslant j\leqslant\nu\left(  a\right)  }$ (resp. $\left(
X_{a^{\prime},j^{\prime}}^{\prime}\right)  _{1\leqslant j^{\prime}\leqslant
\nu\left(  a^{\prime}\right)  }$) of the branches of $X$ (resp. $X^{\prime}$)
at $a$ (resp. $a^{\prime}$). Let $\left(  a,j\right)  $ be in $\Sigma$ and
$W_{a,j}=\pi^{-1}\left(  X_{a,j}\right)  $. Then the restriction $\pi_{j}%
=\pi\left\vert _{W_{a,j}}^{X_{a,j}}\right.  $ is bijective so we can define
$s_{j}=\pi_{j}^{-1}\left(  a\right)  \in S$, $s_{j}^{\prime}=\Phi\left(
s_{j}\right)  $, $b\left(  a,j\right)  =\pi^{\prime}\left(  s^{\prime}\right)
=\pi^{\prime}\left(  \Phi_{j}\left(  \pi_{j}^{-1}\left(  a\right)  \right)
\right)  $ and $\sigma\left(  a,j\right)  =\left(  b\left(  a,j\right)
,k\left(  a,j\right)  \right)  $ where $k\left(  a,j\right)  $ is the integer
such that $\pi^{\prime}\left(  \Phi\left(  W_{a,j}\right)  \right)  $ and
$X_{b\left(  a,j\right)  ,k\left(  a,j\right)  }^{\prime}$ have the same germ
at $b\left(  a,j\right)  $. The map $\sigma:$ $\Sigma\longrightarrow
\Sigma^{\prime}$ is bijective by construction~; we set $\sigma^{-1}=\left(
b^{\prime},k^{\prime}\right)  $. The map $\varphi=\pi^{\prime}\circ\Phi
\circ\pi_{r}^{-1}$ ($\pi_{r}\overset{def}{=}\pi\left\vert _{\pi^{-1}\left(
\operatorname*{Reg}X\right)  }^{\operatorname*{Reg}X}\right.  $) extends as a
multivaluate map, still denoted by $\varphi$, continuous along each branch of
$X$ if $\varphi\left\vert _{X_{a,j}^{\ast}}\right.  $ is prolonged at $a$ by
the value $b\left(  a,j\right)  $. Likewise, we denote by $\psi$ the
multivaluate extension of $\pi\circ\Phi^{-1}\circ\pi_{r}^{\prime-1}$ ($\pi
_{r}^{\prime}\overset{def}{=}\pi^{\prime}\left\vert _{\pi^{\prime-1}\left(
\operatorname*{Reg}X^{\prime}\right)  }^{\operatorname*{Reg}X^{\prime}%
}\right.  $) such that for any $\left(  a^{\prime},j^{\prime}\right)
\in\Sigma^{\prime}$, $\psi\left\vert _{X_{a^{\prime},j^{\prime}}}\right.  $ is
continuous and, hence, take the value $b^{\prime}\left(  a^{\prime},j^{\prime
}\right)  $ at $\left(  a^{\prime},j^{\prime}\right)  $. To get that $X$ and
$X^{\prime}$ are actually isomorphic, we check that if $a$ is a given node of
$X$, $b\left(  a,1\right)  =\cdots=b\left(  a,\nu\left(  a\right)  \right)
\overset{def}{=}a^{\prime}$ and $\nu\left(  a^{\prime}\right)  =\nu\left(
a\right)  $.

Assume now that $c_{1}$ is generic for $X$. Let $a^{\prime}$ be a node of
$X^{\prime}$. The family $\left(  X_{a^{\prime},j^{\prime}}^{\prime}\right)
_{1\leqslant j^{\prime}\leqslant\nu^{\prime}\left(  a^{\prime}\right)  }$ of
its branches can be part in disjoint families $\left(  X_{a^{\prime}%
,j^{\prime}}^{\prime}\right)  _{j^{\prime}\in J_{a^{\prime}}^{m}}$,
$1\leqslant m\leqslant\mu$, such that , for each $m$ and each $j^{\prime}\in
J_{a^{\prime}}^{m}$, $\left\{  a_{m}\right\}  \cup\varphi^{-1}(X_{a^{\prime
},j^{\prime}}^{\prime\ast})$ is some branch $X_{a_{m},\ell_{m}\left(
j^{\prime}\right)  }$ of $X$ at some node $a_{m}$ of $X$. Since $\widetilde
{u_{1}}^{c_{1}^{\prime}}$ is a harmonic distribution solving a Dirichlet
problem associated to the admissible family $\left(  c_{1}^{\prime}\right)  $,
we know that
\[
0=%
{\displaystyle\sum\limits_{1\leqslant j\leqslant\nu^{\prime}\left(  a^{\prime
}\right)  }}
c_{1,a^{\prime},j}^{\prime}=%
{\displaystyle\sum\limits_{1\leqslant j\leqslant\nu^{\prime}\left(  a^{\prime
}\right)  }}
c_{1,\sigma^{^{-1}}\left(  a^{\prime},j^{\prime}\right)  }=%
{\displaystyle\sum\limits_{1\leqslant m\leqslant\mu}}
\,%
{\displaystyle\sum\limits_{j^{\prime}\in J_{a^{\prime}}^{m}}}
c_{1,a_{m},\ell_{m}\left(  j^{\prime}\right)  }.
\]
As $\left(  c_{1}\right)  $ is generic for $X$, this imply that $\ell_{m}$ is
a bijection from $J_{a^{\prime}}^{m}$ onto $\left\{  1,...,\nu\left(
a_{m}\right)  \right\}  $. Hence, if $a$ is one of the $a_{m}$, any branch
$X_{a,j}$ of $X$ at $a$ is send by $\varphi$ to a branch of $X^{\prime}$ at
$a^{\prime}$. As any node of $X$ is send by $\varphi$ to a node of $X^{\prime
}$, this proves that $\varphi$ is actually continuous. If one of the $\left(
c_{\ell}^{\prime}\right)  $ is also generic for $X^{\prime}$, the same apply
for $\psi$.
\end{proof}

Now that reasonable uniqueness is achieved for the nodal IDN-problem, comes
the question of reconstructing solutions from the data boundary. The second
point of the result below shows how to first recover $F\left(  X\right)  $ and
$\partial\widetilde{u_{\ell}}$ from $\theta u_{\ell}$ and the intersection of
$F\left(  X\right)  $ with the lines $\Delta_{\xi}=\{z_{2}:=\frac{w_{2}}%
{w_{0}}=\xi\}$, $\xi\in\mathbb{C}$. Once this is done, the third point gives a
process to recover a normalization of $F\left(  X\right)  $. The fourth
statement enable to reconstruct $X$ itself if the admissible family is generic.

\begin{theorem}
\label{T/ looseIDN}Assume that $X$ is an open bordered nodal curve with
\textit{restricted DN-datum} $\left(  \gamma,u,\theta u\right)  $ associated
to admissible families without zero coefficient. Consider any normalization
$\widehat{X}\overset{\pi}{\longrightarrow}X$ of $X$. Then, the following holds

1) The map $\pi^{\ast}f$ where $f$ is defined by (\ref{F/ f}) has a
meromorphic extension $\widehat{F}$ to $X$ and there are discrete sets
$\widehat{T}$ and $S$ in $\widehat{X}$ and $Y=F\left(  X\right)  \backslash
f\left(  \gamma\right)  $ respectively such that $F:\widehat{X}\backslash
\widehat{T}\rightarrow Y\backslash S$ is one to one.

2) Almost all $\xi_{\ast}\in\mathbb{C}$ has a neighborhood $W_{\xi_{\ast}}$
such that for all $\xi$ in $W_{\xi_{\ast}}$, $Y_{\xi}=Y\cap\Delta_{\xi
}=\underset{1\leqslant j\leqslant p}{\cup}\{(h_{j}\left(  \xi\right)  ,\xi)\}$
where $h_{1},...,h_{p}$ are $p$ mutually distinct holomorphic functions on
$W_{\xi_{\ast}}$ whose symmetric functions $S_{h,m}=\underset{1\leqslant
j\leqslant p}{\Sigma}h_{j}^{m}$ can be recovered by the Cauchy type integral
formulas%
\begin{equation}
\frac{1}{2\pi i}\int_{\gamma}\frac{f_{1}^{m}}{f_{2}-\xi}df_{2}=S_{h,m}\left(
\xi\right)  +P_{m}\left(  \xi\right)  ,~m\in\mathbb{N}, \tag{%
$E_{m,\protect\xi}$%
}\label{F/ equation pour Y}%
\end{equation}
where $P_{m}$ is a polynomial of degree at most $m$. More precisely, the
system $E_{\xi}=(E_{m,\xi_{\nu}})_{\substack{0\leqslant m\leqslant M-1
\\o\leqslant\nu\leqslant N-1}}$ enables explicit computation of $h_{j}\left(
\xi_{\nu}\right)  $ and $P_{m}$ if $N\geqslant M\geqslant2p+1$ and $\xi
_{0},...,\xi_{N}$ are mutually distinct points.

3) Consider $\widehat{U_{\ell}}=\pi_{\ast}\widetilde{u_{\ell}}^{c_{\ell}}$,
$0\leqslant\ell\leqslant2$. Then $\partial\widehat{U_{\ell}}$ are meromorphic
$\left(  1,0\right)  $-forms and for almost all $\xi_{\ast}\in\mathbb{C}$,
$W_{\xi_{\ast}}$ can be chosen so that $S\cap\underset{\xi\in W_{\xi_{\ast}}%
}{\cup}Y_{\xi}=\varnothing$ and $\partial\widehat{U_{\ell}}$ can be
reconstructed in $\widehat{F}\,^{-1}(\underset{\xi\in W_{\xi_{\ast}}}{\cup
}Y_{\xi})$ from the well defined meromorphic quotient $(\partial
\widehat{U_{\ell}})/(\partial\widehat{F}_{2})$ thanks to the Cauchy type
formulas
\begin{equation}
\frac{1}{2\pi i}\int_{\gamma}\frac{f_{1}^{m}}{f_{2}-\xi}\theta u_{\ell}=%
{\displaystyle\sum\limits_{1\leqslant j\leqslant p}}
h_{j}\left(  \xi\right)  ^{m}\frac{\partial\widehat{U_{\ell}}}{\partial
\widehat{F}_{2}}\left(  \widehat{F}\,^{-1}\left(  h_{j}\left(  \xi\right)
,\xi\right)  \right)  \mathbb{+}Q_{m}\left(  \xi\right)  \tag{%
$T_{m,\protect\xi}$%
}\label{F/ equation Theta}%
\end{equation}
where $m$ is any integer and $Q$ is a polynomial of degree at most $m$.

4) Let $y$ be a singular point of $Y$ and $C$ a representative of an
irreducible component of the germ of $Y$ at $y$ whose boundary is a smooth
real curve of $\operatorname*{Reg}Y$. Then $C$ is the image by $F$ of a branch
of $X$ if and only if $\int_{\partial C}\partial F_{\ast}\widetilde{u_{\ell}%
}^{c_{\ell}}\neq0$. If so, $\frac{1}{2\pi i\,}\int_{\partial C}\partial
F_{\ast}\widetilde{u_{\ell}}^{c_{\ell}}=c_{a,j}$ where $a$ is a node of $X $
and $j$ is the index such that $X_{a,j}=F^{-1}\left(  C\right)  $ is one of
the branches of $X$ at $a$. Equivalently, $C$ is the image by $F$ of a branch
of $X$ if and only if $\int_{C}\left(  \partial F_{\ast}\widetilde{u_{\ell}%
}^{c_{\ell}}\right)  \wedge\overline{(\partial F_{\ast}\widetilde{u_{\ell}%
}^{c_{\ell}})}=+\infty$.
\end{theorem}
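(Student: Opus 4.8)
The plan is to follow the reconstruction scheme of \cite[th.~2]{HeG-MiV2007}, importing from the proof of theorem~\ref{T/ unicite} the structure of the image curve, and to add a residue analysis at nodes for the fourth assertion. For (1), proposition~\ref{SingLog} guarantees that each $\partial\widetilde{u_\ell}^{c_\ell}$ is a meromorphic $(1,0)$-form with at worst simple poles at the nodes, so the quotients $f_1,f_2$ of (\ref{F/ f}) extend as the meromorphic map $F$ of (B3); pulling back by the normalization gives a meromorphic map $\widehat F=F\circ\pi$ on $\widehat X$ with boundary values $\pi^{\ast}f$. The analysis in the proof of theorem~\ref{T/ unicite} shows that $Y=F(X)\setminus f(\gamma)$ is a complex curve of finite mass with $d[Y]=[\delta]$, $\delta=f(\gamma)$, and that $F$ is an isomorphism off finite sets; taking $S=\operatorname{Sing}\overline Y$ and $\widehat T=\pi^{-1}(A)\cup F^{-1}(S)$, with $A$ the exceptional set of (B4), yields the one-to-one statement $F:\widehat X\setminus\widehat T\to Y\setminus S$.

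For (2) I would fix $\xi_\ast$ outside the discrete discriminant locus of the projection $z_2:Y\to\mathbb C$ and outside the image of $S$; over a small disc $W_{\xi_\ast}$ the local description of one dimensional analytic sets presents $Y$ as a $p$-sheeted graph $\{(h_j(\xi),\xi)\}_{1\le j\le p}$ with $h_1,\dots,h_p$ distinct and holomorphic, so $S_{h,m}=\sum_j h_j^m$ is holomorphic on $W_{\xi_\ast}$. The formulas $(E_{m,\xi})$ and $(T_{m,\xi})$ are then obtained by pairing the current identity $d[Y]=[\delta]$ against the meromorphic forms $\frac{z_1^m}{z_2-\xi}\,dz_2$ and $\frac{z_1^m}{z_2-\xi}\,\partial F_\ast\widetilde{u_\ell}^{c_\ell}$: Stokes' formula turns the boundary integral over $\gamma=\delta$ into the sum of residues of these forms on $\Delta_\xi\cap\overline Y$. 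The residues along the line at infinity $\{w_0=0\}$ are polynomial in $\xi$ of degree at most $m$, since $z_1^m$ has a pole of order $m$ there, producing $P_m$ and $Q_m$; the residues at the affine fibre points $(h_j(\xi),\xi)$ give $\sum_j h_j(\xi)^m$ for $(E_{m,\xi})$ and, writing $\partial\widehat{U_\ell}=\frac{\partial\widehat{U_\ell}}{\partial\widehat F_2}\,\partial\widehat F_2$, the weighted sum $\sum_j h_j(\xi)^m\frac{\partial\widehat{U_\ell}}{\partial\widehat F_2}(\widehat F^{-1}(h_j(\xi),\xi))$ for $(T_{m,\xi})$.

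The solvability of the systems is linear algebra. For $E_\xi$ the power sums $S_{h,1},\dots,S_{h,p}$ determine $h_1,\dots,h_p$ by Newton's identities, so it suffices to separate in each $E_{m,\xi}$ the unknown $S_{h,m}(\xi)$ from the polynomial $P_m$ of degree $\le m$; evaluating at $N\ge M\ge 2p+1$ distinct points $\xi_0,\dots,\xi_N$ and eliminating the $P_m$ by a Vandermonde argument in $\xi$ leaves an invertible system for the $h_j(\xi_\nu)$. Applying the same scheme to $T_{m,\xi}$, where the $h_j(\xi_\nu)$ are now known, gives a system linear in the $p$ values $\frac{\partial\widehat{U_\ell}}{\partial\widehat F_2}(\widehat F^{-1}(h_j,\xi))$ and in the coefficients of $Q_m$; distinctness of the $h_j$ makes the associated Vandermonde matrix invertible, and multiplying the recovered quotient by $\partial\widehat F_2$ reconstructs $\partial\widehat{U_\ell}$ on $\widehat F^{-1}(\bigcup_{\xi\in W_{\xi_\ast}}Y_\xi)$.

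Assertion (4) is where the nodal structure enters and is the genuinely new point over \cite{HeG-MiV2007}. If $C=F(X_{a,j})$ for a branch $X_{a,j}$ at a node $a$, proposition~\ref{SingLog} gives that $\partial\widetilde{u_\ell}^{c_\ell}$ has a simple pole with residue $c_{a,j}$ along $X_{a,j}$; since $F$ restricts to an isomorphism of $X_{a,j}^{\ast}$ onto $C\setminus\{y\}$, the push-forward $\partial F_\ast\widetilde{u_\ell}^{c_\ell}$ has a simple pole at $y$ with the same residue, whence $\frac{1}{2\pi i}\int_{\partial C}\partial F_\ast\widetilde{u_\ell}^{c_\ell}=c_{a,j}$, which is nonzero because the admissible families have no zero coefficient. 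Conversely, a germ component $C$ not of this form is the image of a neighborhood of a regular point of $X$ lying in $A$; there $\widetilde{u_\ell}^{c_\ell}$ is an ordinary harmonic function, $\partial F_\ast\widetilde{u_\ell}^{c_\ell}$ extends holomorphically across $y$, hence is closed, and $\int_{\partial C}\partial F_\ast\widetilde{u_\ell}^{c_\ell}=0$ by Stokes. The $L^2$ reformulation follows at once, a simple pole making $\int_C\partial F_\ast\widetilde{u_\ell}^{c_\ell}\wedge\overline{\partial F_\ast\widetilde{u_\ell}^{c_\ell}}$ diverge like $\int dr/r$ while a holomorphic form is locally square integrable. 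The main obstacle throughout is the analysis at the line at infinity, namely securing the degree bounds on $P_m,Q_m$ and the genericity of $\xi_\ast$, together with justifying the residue computation on the singular curve $Y$ by passing to its normalization.
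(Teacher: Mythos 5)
Your proposal is correct and follows essentially the same route as the paper: the paper reduces assertions (1)--(3) to theorem~2 of \cite{HeG-MiV2007} applied to the normalization $\widehat{X}$ after observing that the $\partial\widehat{u_\ell}$ extend meromorphically with simple poles, which is exactly the Cauchy--Stokes residue machinery you unpack by hand, and its proof of (4) is the same residue computation at the node (simple pole with residue $c_{a,j}\neq 0$ for a branch image, vanishing period and finite Dirichlet integral otherwise). The only difference is one of presentation, not of substance.
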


\noindent\textbf{Remarks. 1. }Let $p_{2}$ be the second natural projection of
$\mathbb{C}^{2}$ onto $\mathbb{C}$ and let $\gamma_{2}$ denote the real curve
$p_{2}\left(  f\left(  \gamma\right)  \right)  $. Then the points $\xi_{\ast}$
in the first statement can be any element of $\mathbb{C}\backslash\gamma_{2}$
outside a discrete set $\Delta$. More precisely, let $\Gamma$ be a connected
component of $\mathbb{C}\backslash\gamma_{2}$ and let $H$ be a holomorphic
function on $\mathbb{C}\times\Gamma$ such that $Y\cap p_{2}^{-1}\left(
\Gamma\right)  =\left\{  H=0\right\}  \cap p_{2}^{-1}\left(  \Gamma\right)  $
and $H_{z_{2}}=H\left(  .,z_{2}\right)  $ is a unitary polynomial for every
$z_{2}\in\Gamma$. Then, $\xi_{\ast}$ can be any element of the discrete set
where the discriminant $\operatorname{discr}H_{z_{2}}$ of $H_{z_{2}}$ don't
vanish. If $\xi_{\ast}$ is so, $W_{\xi_{\ast}}$ can be chosen to be $\left\{
\operatorname{discr}H_{z_{2}}\neq0\right\}  $. Note the equations $\left(
E_{m,\xi}\right)  $ enable to recover $H$.

\textbf{2. }The first three points of the theorem above are equivalent to
\cite[th.~2]{HeG-MiV2007}. (4) is a device for detecting whose singularities
of $Y$ are lying in $\pi{}^{-1}\left(  \operatorname*{Sing}X\right)  $ and
whose have appeared because $F$ is not necessarily an embedding. Note that the
characterizations given for $C$ has an invariant meaning. Indeed, if
$C=F\left(  B\right)  $ where $B$ is a branch of $X$ at $a$, then $\int
_{C}\left(  \partial F_{\ast}\widetilde{u_{\ell}}^{c_{\ell}}\right)
\wedge\overline{(\partial F_{\ast}\widetilde{u_{\ell}}^{c_{\ell}})}=\int
_{B}\left(  \partial\widetilde{u_{\ell}}^{c_{\ell}}\right)  \wedge
\overline{(\partial\widetilde{u_{\ell}}^{c_{\ell}})}$ and $\int_{\partial
C}\partial F_{\ast}\widetilde{u_{\ell}}^{c_{\ell}}=\int_{\partial B}%
\partial\widetilde{u_{\ell}}^{c_{\ell}}$.

\textbf{3.} If an admissible family has a zero coefficient, the above method
recovers branches corresponding to non zero coefficients but can't detect the
others. In particular, the method don't recognize nodes corresponding to zero
admissible family.

\textbf{4. }The fourth statement of the theorem enables to reconstruct in
$Z=\widehat{X}$, the set $S=\pi^{-1}\left(  \operatorname*{Sing}X\right)  $,
the map $\mu:S\longrightarrow\mathbb{N}$ such that $X_{\pi\left(  s\right)
,\mu\left(  s\right)  }=\pi\left(  W_{s}\right)  $ for some neighborhood
$W_{s}$ of $s$ in $Z$ and $c_{\ell,\pi\left(  s\right)  ,\mu\left(  s\right)
}=\frac{1}{2\pi i\,}\int_{\partial F\left(  W_{s}\right)  }\partial F_{\ast
}\widetilde{u_{\ell}}^{c_{\ell}}\overset{def}{=}\kappa_{\ell,s}$. If one of
the admissible families is known to be generic, then there is only one way to
split $S$ in non empty subsets $S_{1},..,S_{k}$ such that $X$ is the quotient
of $Z$ when for each $j$, points in $S_{j}$ are identified. This partition of
$S$ is also determined by the fact that if $T_{1},..,T_{\ell}$ is another
partition of $S$ such that $%
{\displaystyle\sum\limits_{s\in T_{m}}}
\kappa_{\ell,s}=0$ for any $m$, then each $T_{m}$ is the union of some the
$S_{1},...,S_{k}$. If no admissible family is known to be generic, $X$ is
roughly isomorphic to the nodal curve $X_{S}$ determined by $Z$ and
$S_{1},..,S_{k}$ and is obtained by supplementary identification\ in the set
of nodes of $X_{S}$.\medskip

\begin{proof}
[Proof]Denote by $F$ the meromorphic extension of $f$ to $X$ and put
$Y=F\left(  X\right)  \backslash\delta$ where $\delta=f\left(  \gamma\right)
$. Let $\widehat{X}\overset{\pi}{\longrightarrow}X$ be a normalization of $X$.
Then $\widehat{X}\overset{\widehat{F}=F\circ\pi}{\longrightarrow}Y$ is a
normalization of $Y$ and $\widehat{\gamma}=\pi^{-1}\left(  \gamma\right)  $ is
a real curve diffeomorphic to $\gamma$. The functions $\widehat{u_{\ell}}%
=\pi^{\ast}u_{\ell}$ are well defined functions on $\widehat{\gamma}$ and they
extends as harmonic functions $\widehat{U_{\ell}}$ on $\widehat{X}%
\backslash\pi^{-1}\left(  \operatorname*{Sing}X\right)  $ and as distributions
on $\widehat{X}$ with logarithmic singularities at points of $\pi^{-1}\left(
\operatorname*{Sing}X\right)  $. However, the forms $\partial\widehat{u_{\ell
}}$ extends to $\widehat{X}$ as meromorphic $\left(  1,0\right)  $-forms with
simple poles. Hence, $\widehat{f}=\pi^{\ast}f$ extends to $\widehat{X}$ as a
meromorphic function which is of course $\widehat{F}$. This is sufficient to
apply theorem~2 of \cite{HeG-MiV2007} whose statements are readily report here
by points (1) through (3). Notes that among the points of $\widehat{F}{}%
^{-1}\left(  \operatorname*{Sing}Y\right)  $ are those of $\pi{}^{-1}\left(
\operatorname*{Sing}X\right)  $ and the others which have appear only because
$F$ is not necessarily an embedding.

Since admissible families are assumed to be without zero coefficient, the
harmonic distributions $\widetilde{u_{\ell}}^{c_{\ell}}$ have logarithmic
singularities along each branch of at each node of $X$ while they are usual
harmonic function at regular points of $X$. This implies that the
$\widehat{U_{\ell}}=\pi_{\ast}\widetilde{u_{\ell}}^{c_{\ell}}$ have
singularities at each point above a node of $X$ and is bounded near points of
$\widehat{F}{}^{-1}\left(  \operatorname*{Sing}Y\right)  \backslash\pi{}%
^{-1}\left(  \operatorname*{Sing}X\right)  $. Since (3) explains how to
recover the $\partial\widehat{U_{\ell}}/\partial F_{2}$ and hence the
$\Theta_{\ell}=\partial\widehat{U_{\ell}}$, can be rebuilt from data boundary
by explicit formulas, the same applies for $\pi{}^{-1}\left(
\operatorname*{Sing}X\right)  $.

Let now $y\in Y$ be a singularity of $Y$ and $C$ an irreducible component of
$Y$ at $y$ whose boundary is a smooth real curve of $\operatorname*{Reg}Y$.
Assume that $C$ is the image by $F$ of a branch $X_{a,j}$ of $X$ where $a$ is
some node of $X$. Then $C$ is smooth and since when $z_{j}$ is a holomorphic
coordinate for $X_{a,j}$ centered at $a$, $\widetilde{u_{\ell}}^{c_{\ell}%
}\left\vert _{X_{a,j}^{\ast}}\right.  -2c_{a,j}\ln\left\vert z_{j}\right\vert
$ extends as a usual harmonic function on $X_{a,j}$, $\partial F_{\ast
}\widetilde{u_{\ell}}^{c_{\ell}}\left\vert _{X_{a,j}}\right.  $ is a
meromorphic $\left(  1,0\right)  $-form whose only singularity is at $y$ where
it has a simple pole with residue $c_{a,j}$. If $C$ contains no points of
$F\left(  \operatorname*{Sing}X\right)  $, then $\widetilde{u_{\ell}}%
^{c_{\ell}}$ is a usual harmonic function on $F^{-1}\left(  C\right)  $ and
$\int_{\partial C}\partial F_{\ast}\widetilde{u_{\ell}}^{c_{\ell}}$ has to be
zero. The equivalent characterization by the finiteness of the Dirichlet is straightforward.
\end{proof}

\subsubsection{Proof of theorem~\ref{T/ bordR3}}

The first part of the theorem is a particular case of theorem~\ref{T/ unicite}
applied to the nodal curves $\mathcal{X}$ and $\mathcal{X}^{\prime} $ obtained
by identifying in $X$ (resp. $X^{\prime}$) the points $a_{j}^{+}$ and
$a_{j}^{-}$ (resp. $a_{j}^{\prime+}$ and $a_{j}^{\prime-}$), $1\leqslant
j\leqslant\nu$. The reconstruction part is a particular case of
theorem~\ref{T/ looseIDN} applied to $\mathcal{X}$ and $\mathcal{X}^{\prime} $.

\section{Characterization of DN-data}

We give here some criterion to characterize what is a DN-datum. The theorem we
propose here is a development for the nodal type case of theorem~3 of
\cite{HeG-MiV2007}. Characterization theorems~3b and 4 of \cite{HeG-MiV2007}
and the characterization results of~\cite{WaR2008} also can be adapted to the
nodal type case but we avoid in this paper the heavy formulation they require.

For the sake of simplicity, open surfaces were assumed in the preceding
sections to have smooth boundaries even if they may be singular in their
interior. For a characterization device, we have to consider as in
\cite{HeG-MiV2007} almost smooth boundaries. An open bordered nodal curve $X$
has \textit{almost smooth boundary }$\gamma$ if$^{\text{(}}$\footnote{$h^{d}$
is the $d$-dimensionnial Hausdorff measure.}$^{\text{)}}$ $h^{2}\left(
\,X\cup\gamma\,\right)  <\infty$, $\gamma$ is a smooth oriented real curve
without component reduce to a point and if for some open neighborhood $W$ of
$\gamma$ in $X\cup\gamma$, $W\backslash\gamma$ is an open Riemann surface such
that the set $W_{\operatorname{sing}}$ of points of $\gamma$ where $W$ has not
smooth boundary satisfies $h^{1}\left(  W_{\operatorname{sing}}\right)  =0$.

If $X$ is an open bordered nodal curve with almost smooth boundary\textit{\ }%
$\gamma$, an adaptation of the preceding results to get as for classical
results (see e.g. \cite{AhL-SaL1960Livre}) that for any admissible family $c$,
a real valued function $u$ of class $C^{1}$ on $\gamma$ has a unique extension
$\widetilde{u}^{c}$ to $\overline{X}$ as a harmonic distribution such that
$\int_{W}i\,\partial\widetilde{u}^{c}\wedge\overline{\partial}\widetilde
{u}^{c}<+\infty$ for some open neighborhood $W$ of $\gamma$ in $\overline{X}$
and such that for each branch $X_{a,j}$ at a node $a$ of $X$,
$\operatorname*{Res}_{a}\left(  \left.  \partial u\right\vert _{X_{a,j}%
}\right)  =c_{a,j}$. Moreover, $N_{X,c}u$ still make sense as the element of
the dual space of $C^{1}\left(  \gamma\right)  $ which equals $\partial
\widetilde{u}^{c}/\partial\nu$ on $\gamma\backslash\overline{\mathcal{X}%
}_{\operatorname{sing}}$ (see \cite[prop.~12]{HeG-MiV2007}).\medskip

The first condition for $\left(  \gamma,u,\theta u\right)  $ to be a DN-datum
is for $\gamma$ to border a complex curve whose tangent bundle along $\gamma$
is the real two dimensional bundle given with the datum and to which $f$
extends meromorphically. Part (a) and (b) of the theorem beneath gives a
necessary and sufficient condition for this to occur. This fact does not
really depend of whether or not $u$ and $\theta u$ are restrictions of first
derivatives of a harmonic distribution. Part (c) gives a necessary and
sufficient condition for $u_{\ell}$ and $\theta u_{\ell}$ to be boundary
values coming from a harmonic distribution with logarithmic singularities. The
real curve $\gamma$ is assumed in part (b) and (c) to be connected for the
sake of simplicity.

\begin{theorem}
\label{T/ caract - a}Assume that hypothesis A is valid and consider$\vspace
{-3pt}$%
\begin{equation}
G:\mathbb{C}^{2}\ni\left(  \xi_{0},\xi_{1}\right)  \mapsto\frac{1}{2\pi i}%
\int_{\gamma}f_{1}\frac{d\left(  \xi_{0}+\xi_{1}f_{1}+f_{2}\right)  }{\xi
_{0}+\xi_{1}f_{1}+f_{2}}.\vspace{-3pt} \label{F/ fonction G}%
\end{equation}

(\textbf{a)} If an open bordered nodal curve $X$ has restricted DN-datum
$\left(  \gamma,u,\theta u\right)  $, then almost all point $\xi_{\ast}$\ of
$\mathbb{C}^{2}$\ has a neighborhood where one can find a find a
family,\ possibly empty, $\left(  h_{1},...,h_{p}\right)  $ of mutually
distinct holomorphic functions\ such that%
\begin{equation}
0=\frac{\partial^{2}}{\partial\xi_{0}^{2}}(G-%
{\displaystyle\sum\limits_{1\leqslant j\leqslant p}}
h_{j}) \label{F/ caract G}%
\end{equation}
and which satisfy the Riemann-Burgers equation%
\begin{equation}
h_{j}\frac{\partial h_{j}}{\partial\xi_{0}}=\frac{\partial h_{j}}{\partial
\xi_{1}},1\leqslant j\leqslant p. \label{F/ SW}%
\end{equation}

(\textbf{b)} Conversely, assume $\gamma$ is connected and the conclusion of
(a) is satisfied in a connected neighborhood $W_{\xi_{\ast}}$ of one point
$\left(  \xi_{0\ast},\xi_{1\ast}\right)  $.

Then, if $\left(  \partial^{2}G/\partial\xi_{0}^{2}\right)  _{\left\vert
W_{\xi_{\ast}}\right.  }\neq0$, there is an open Riemann surface $Z$ with
almost smooth boundary$^{\text{(}}$\footnote{With \cite[example 10.5]%
{HaR-LaB1975}, one can construct smooth restricted DN-datas for which the
solution of the IDN-problem is a manifold with only almost smooth
boundary.}$^{\text{)}}$ $\gamma$ where $f$\ extends meromorphically.

If $\left(  \partial^{2}G/\partial\xi_{0}^{2}\right)  _{\left\vert
W_{\xi_{\ast}}\right.  }=0$, the same conclusion holds for $\gamma$ or for
$-\gamma$ which denotes the same curve but endowed with the opposite
orientation. More precisely, when $\left(  \partial^{2}G/\partial\xi_{0}%
^{2}\right)  _{\left\vert W_{\xi_{\ast}}\right.  }=0$ and the expected
conclusion holds for $-\gamma$, $f\left(  -\gamma\right)  $ is the boundary
(in the sense of currents) of a possibly singular complex curve $Y$ of
$\mathbb{C}^{2}\backslash f\left(  \gamma\right)  $ and the expected
conclusion holds for $\gamma$ if and only if $Y$ is algebraic.

(\textbf{c)} Assume that $\left(  \,\overline{Z},\gamma\,\right)  $\ is a
Riemann surface with almost smooth boundary, let $\mathcal{D}$ be some smooth
domain in the double of $Z$ containing $\overline{Z}$ and let $g$ be a Green
function for $\mathcal{D}$.

Then, $\left(  \gamma,u,\theta u\right)  $\ is actually a restricted DN-datum
for the open bordered nodal curve $X$ obtained by identifying in $Z$ points
within each family $\left(  a_{n,j}\right)  _{1\leqslant j\leqslant\nu_{n}}$,
$1\leqslant n\leqslant N$, if and only if there exists a family of non zero
complex numbers $\left(  c_{\ell,n,j}\right)  _{\substack{0\leqslant
\ell\leqslant2 \\1\leqslant j\leqslant\nu_{n}}}$ such that $%
{\displaystyle\sum\limits_{1\leqslant j\leqslant\nu_{n}}}
c_{\ell,n,j}=0$ for any $\left(  \ell,n\right)  \in\left\{  0,1,2\right\}
\times\left\{  1,...,N\right\}  $ with the property that for any
$z\in\mathcal{D}\backslash\overline{Z}$ and any $\ell\in\left\{
0,1,2\right\}  $,%
\begin{equation}
\frac{2}{i}\int_{\gamma}u_{\ell}\left(  \zeta\right)  \partial_{\zeta}g\left(
\zeta,z\right)  +g\left(  \zeta,z\right)  \overline{\theta u_{\ell}\left(
\zeta\right)  }=2\pi%
{\displaystyle\sum\limits_{1\leqslant n\leqslant N}}
{\displaystyle\sum\limits_{1\leqslant j\leqslant\nu_{n}}}
c_{\ell,n,j}g\left(  a_{n,j},z\right)  \label{F/ Green}%
\end{equation}

\end{theorem}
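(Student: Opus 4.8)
The plan is to adapt the proof of theorem~3 of \cite{HeG-MiV2007} to the nodal setting. The essential new input is proposition~\ref{SingLog}: along each branch of $X$ the forms $\partial\widetilde{u_\ell}^{c_\ell}$ carry only simple poles whose residues sum to zero at every node, so $f$ behaves along each branch exactly as the smooth map treated in \cite{HeG-MiV2007}, and the only genuine modification concerns the bookkeeping of the residues $c_{\ell,n,j}$. For part~(\textbf{a}) I would start from the restricted DN-datum hypothesis and theorem~\ref{T/ looseIDN}: the map $f$ of (\ref{F/ f}) extends meromorphically to $F$ on $X$, and $Y=F(X)\setminus\delta$ is a complex curve of $\mathbb{C}^2\setminus\delta$ with $d[Y]=[\delta]$, where $\delta=f(\gamma)$. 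Fixing $(\xi_{0*},\xi_{1*})$ so that the line $L_\xi=\{\xi_0+\xi_1w_1+w_2=0\}$ meets $\overline{Y}$ transversally and avoids $\operatorname{Sing}\overline{Y}$ --- which excludes only a discrete set --- the intersection $Y\cap L_\xi$ has $w_1$-coordinates that are holomorphic functions $h_1(\xi),\dots,h_p(\xi)$ near $\xi_*$, by the implicit function theorem applied to a local equation $w_2=\phi(w_1)$ of $Y$. The argument principle along $\gamma=bZ$, combined with $d[Y]=[\delta]$, identifies the Cauchy integral $G$ of (\ref{F/ fonction G}) with $\sum_j h_j$ up to a contribution from the line at infinity of $\mathbb{CP}_2$, which is a polynomial of degree at most one in $\xi_0$; this yields (\ref{F/ caract G}). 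Differentiating the relation $\phi(h_j)+\xi_0+\xi_1 h_j=0$ gives $\partial_{\xi_0}h_j=-1/(\phi'+\xi_1)$ and $\partial_{\xi_1}h_j=-h_j/(\phi'+\xi_1)$, whence the Riemann-Burgers equation (\ref{F/ SW}).

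For the converse~(\textbf{b}) I would read (\ref{F/ SW}) as Burgers' equation, whose characteristics are the lines $\xi_0+h_j\xi_1=\text{const}$ along which $h_j$ is constant. Solving implicitly, the assignment $w_1=h_j$, $w_2=-(\xi_0+\xi_1 h_j)$ traces a holomorphic curve independent of the point chosen on a characteristic; gluing these local pieces produces a candidate curve $Y$. Condition (\ref{F/ caract G}) together with hypothesis~A and the structure of $G$ then furnishes, by the moment argument of \cite[lemma~7, th.~1]{HeG-MiV2007}, the relation $d[Y]=[\delta]$; feeding this into the Harvey-Lawson and Wirtinger machinery recalled there yields the open Riemann surface $Z$ with almost smooth boundary $\gamma$ to which $f$ extends meromorphically, with image $Y$. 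The sign of $\partial^2G/\partial\xi_0^2$ records on which side of $\delta$ the curve lies: when it is nonzero the construction attaches to $\gamma$ directly, while when it vanishes the moment conditions hold for $-\gamma$, producing a curve bounding $f(-\gamma)$ that attaches to $\gamma$ precisely when it is compact in $\mathbb{CP}_2$, i.e. algebraic. I expect this orientation-and-algebraicity dichotomy, together with the global gluing and the control of mass and of spurious compact components, to be the main obstacle, exactly as in the smooth case.

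For part~(\textbf{c}), given $(\overline{Z},\gamma)$ and the Green function $g$ for $\mathcal{D}$, I would use proposition~\ref{Dir} to produce, for each candidate family $(c_{\ell,n,j})$ with vanishing sums at the nodes, the unique harmonic distribution $\widetilde{u_\ell}^{c_\ell}$ extending $u_\ell$ with $\partial\widetilde{u_\ell}^{c_\ell}$ of residue $c_{\ell,n,j}$ at $a_{n,j}$. Being a restricted DN-datum for the nodal curve $X$ means precisely that $\theta u_\ell=(\partial\widetilde{u_\ell}^{c_\ell})|_\gamma$ for such a family (condition~B2). To obtain (\ref{F/ Green}) I would apply Green's formula in its complex ($\partial,\overline{\partial}$) form to the pair $(\widetilde{u_\ell}^{c_\ell},g_z)$ on $Z$ for an exterior point $z\in\mathcal{D}\setminus\overline{Z}$: there $g_z=g(\cdot,z)$ is harmonic on $\overline{Z}$, no interior reproducing term appears, and Stokes' formula converts the boundary integral into the sum of residue contributions $2\pi\sum c_{\ell,n,j}\,g(a_{n,j},z)$ coming from the logarithmic poles of $\widetilde{u_\ell}^{c_\ell}$.

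Conversely, if the family exists and (\ref{F/ Green}) holds for all exterior $z$, then the identity says that the Cauchy-Green transform of the datum $(u_\ell,\theta u_\ell)$ to $\mathcal{D}\setminus\overline{Z}$ coincides with the multipolar potential $2\pi\sum c_{\ell,n,j}\,g(a_{n,j},\cdot)$. By the Sohotsky-Fredholm jump relations recalled before proposition~\ref{Dir}, this matching identifies the interior transform with the harmonic distribution $\widetilde{u_\ell}^{c_\ell}$ of proposition~\ref{Dir} and forces $\theta u_\ell=(\partial\widetilde{u_\ell}^{c_\ell})|_\gamma$; this verifies~B2, and with hypothesis~A and conditions (B3)--(B4) already in force, $(\gamma,u,\theta u)$ is a restricted DN-datum for $X$.
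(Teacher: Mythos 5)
Your proposal follows essentially the same route as the paper: parts (a) and (b) are, as in the paper, the smooth-case theorem~3a of \cite{HeG-MiV2007} applied through the normalization (you simply unpack the Cauchy-integral identity, the shock-wave equation and the orientation/algebraicity dichotomy that the paper handles by citation), and part (c) rests on the same Plemelj--Sohotsky jump formula identifying the exterior Cauchy--Green transform with the multipolar Green potential $2\pi\sum c_{\ell,n,j}\,g(a_{n,j},\cdot)$, exactly as in the paper's construction of the distribution $T_{\ell}$. I see no genuine divergence or gap beyond the level of detail the paper itself leaves to \cite{HeG-MiV2007}.
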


\noindent\textbf{Remarks. 1.} (b) is actually the second part of
\cite[th.~3a]{HeG-MiV2007} with some precision about the case $\left(
\partial^{2}G/\partial\xi_{0}^{2}\right)  _{\left\vert W_{\xi_{\ast}}\right.
}=0$. The algebraic criterion is effective since $Y$ can be explicitly
reconstructed from the Cauchy type formulas given in theorem~\ref{T/ looseIDN}
or~\cite[th.~2]{HeG-MiV2007}

\textbf{2. }The $a_{n,j}$ and the $c_{\ell,n,j}$ are unique when they exist
because the right member of~(\ref{F/ Green}) extends to $Z$ as a distribution
$T$ such that $2i\partial\overline{\partial}T=2\pi%
{\displaystyle\sum\limits_{1\leqslant n\leqslant N}}
{\displaystyle\sum\limits_{1\leqslant j\leqslant\nu_{n}}}
c_{\ell,n,j}\delta_{a_{n,j}}dV$ where $dV$ is some volume form for $Z$.

\begin{proof}
Part (a) follows from~\cite[th.~3a]{HeG-MiV2007} since we can apply this
theorem to $Z$ where $Z\overset{\pi}{\longrightarrow}X$ is a normalization
because we only need to know that $\pi^{\ast}f$ embeds $\pi^{\ast}\gamma$ into
$\mathbb{CP}_{2}$ and extends meromorphically to $Z$. The case $\left(
\partial^{2}G/\partial\xi_{0}^{2}\right)  _{\left\vert W_{\xi_{\ast}}\right.
}\neq0$ of part~(b) is the same as~\cite[th.~3a,~B]{HeG-MiV2007}. Assume that
$\left(  \partial^{2}G/\partial\xi_{0}^{2}\right)  _{\left\vert W_{\xi_{\ast}%
}\right.  }=0$. As written in the proof of~\cite[th.~3a,~B]{HeG-MiV2007}, $\pm
f\left(  \gamma\right)  $ is then the boundary of a complex curve in some two
dimensional affine subspace of $\mathbb{CP}_{2}$ and therefor, the expected
conclusion holds for $\pm\gamma$. If both $-f\left(  \gamma\right)  $ and
$f\left(  \gamma\right)  $ are the boundaries in $\mathbb{CP}_{2}\backslash
f\left(  \gamma\right)  $ of a complex curve, namely $Y^{-}$ and $Y^{+}$, then
$Y^{-}\cup f\left(  \gamma\right)  \cup$ $Y^{+}$ is an algebraic complex curve.

Assume now that~(\ref{F/ Green}) is satisfied. Let $\Omega_{z}^{\ell}$ denote
the form $u_{\ell}\partial g\left(  .,z\right)  +g\left(  .,z\right)
\overline{\theta u_{\ell}}$ when $z$ is in $\mathcal{D}\backslash\gamma$ and
define $U_{\ell}^{+}$ (resp. $U_{\ell}^{-}$) the restriction to $\mathcal{D}%
^{+}=Z$ (resp. $\mathcal{D}^{-}=\mathcal{D}\backslash\overline{Z} $) of the
function $\mathcal{D}\backslash\gamma\ni z\mapsto\frac{2}{i}\int_{\gamma
}\Omega_{z}^{\ell}$. Then we know from the Plemelj-Sohotsky formula
of~\cite[lemma~15]{HeG-MiV2007} that $U_{\ell}^{\pm}$ is a harmonic function
on $\mathcal{D}^{\pm}$ which extends continuously to $\overline{\mathcal{D}%
^{\pm}}$, $u_{\ell}=U_{\ell}^{+}-U_{\ell}^{-}$ and $\theta u_{\ell}%
=\overline{\partial}U_{\ell}^{+}-\overline{\partial}U_{\ell}^{-}$ almost every
where on $\gamma$. Hence, $u_{\ell}$ is the boundary value of the distribution
$T_{\ell}$ on $Z$ defined by $T_{\ell}=U_{\ell}^{+}+2\pi%
{\displaystyle\sum\limits_{1\leqslant n\leqslant N}}
{\displaystyle\sum\limits_{1\leqslant j\leqslant\nu_{n}}}
c_{\ell,n,j}g\left(  a_{n,j},z\right)  $ and, by construction, $\partial
T_{\ell}=\theta u_{\ell} $ on $\gamma$.

If $X$ is the nodal curve obtained as in the theorem statement, then the
relations $%
{\displaystyle\sum\limits_{1\leqslant j\leqslant\nu_{n}}}
c_{\ell,n,j}=0$ implies that $T_{\ell}$ can be considered as a harmonic
distribution on $X$. As a consequence, $\left(  \gamma,u,\theta u\right)  $ is
a restricted DN-datum for $X$. The reciprocal follows directly from the definitions.
\end{proof}

\noindent\textbf{Acknowledgement}.\textit{\ The first author was partly
supported by FCP Kadry N 14.A18.21.0866 Russia.}

\renewcommand\baselinestretch{1}{\normalsize
\bibliographystyle{amsperso}
\bibliography{ref}
}%

\end{spacing}%

\end{document}